\newtheorem{theorem}{Theorem}
\newtheorem{lemma}[theorem]{Lemma}
\newtheorem{proposition}[theorem]{Proposition}
\theoremstyle{definition}
\newtheorem*{remark}{Remark}
\newtheorem{example}[theorem]{Example}
\title[Totally geodesic Lagrangian submanifolds of $\Sl\times\Sl$]{Totally geodesic Lagrangian submanifolds of the pseudo-nearly Kähler $\Sl\times\Sl$}
\date{}
\author{Mateo Anarella and Joeri Van der Veken}
\address{M. Anarella, Address 1: Université polytechnique Hauts-de-France, Campus Mont Houy
59313 Valenciennes Cedex 9, France. Address 2: KU Leuven, Department of Mathematics, Celestijnenlaan 200 B – Box 2400, 3001 Leuven, Belgium}
\email{mateo.anarella@kuleuven.be}
\address{J. Van der Veken, KU Leuven, Department of Mathematics, Celestijnenlaan 200 B – Box 2400, 3001 Leuven, Belgium}
\email{joeri.vanderveken@kuleuven.be}
\thanks{M.\ Anarella is supported by Methusalem grant METH/21/03 – long term structural funding of the Flemish Government. 
J.\ Van der Veken is supported by the Research Foundation--Flanders (FWO) and the National Natural Science Foundation of China (NSFC) under collaboration project G0F2319N, by the KU Leuven Research Fund under project 3E210539 and by the Research Foundation--Flanders (FWO) and the Fonds de la Recherche Scientifique (FNRS) under EOS Projects G0H4518N and G0I2222N}
\keywords{nearly Kähler, homogeneous manifold, submanifold, Lagrangian, totally geodesic}
\subjclass[2020]{53C42}
\newcommand{\R}{\mathbb{R}}
\newcommand{\Sl}{\mathrm{SL}(2,\R)}
\newcommand{\id}{\operatorname{Id}}
\newcommand{\li}{\langle}
\newcommand{\ri}{\rangle}
\newcommand{\ii}{\textit{\textbf{i}}}
\newcommand{\jj}{\textbf{\textit{j}}}
\newcommand{\kk}{\textbf{\textit{k}}}
\newcommand{\Ss}{\mathbb{S}}
\newcommand{\slf}{\mathfrak{sl}(2,\R)}
\newcommand{\SO}{\text{SO}}
\newcommand{\Slt}{\mathrm{SL}(3,\R)}
\newcommand{\U}{\text U}
\newcommand{\SU}{\text{SU}}
\newcommand{\isoo}{\operatorname{Iso_o}}
\newcommand{\iso}{\operatorname{Iso}}
\newcommand\columntag[2]{#1\def\@currentlabel{#1}.\label{#2}}
\newcommand{%
    
    \import{./images}{.pdf_tex}
}[2][1]{%
    
    \import{./images}{#2.pdf_tex}
}
\begin{document}

\begin{abstract}
   In this paper, we study Lagrangian submanifolds of the pseudo-nearly Kähler $\Sl\times\Sl$.
   First, we show that they split into four different classes depending on their behaviour with respect to a certain almost product structure on the ambient space.
   Then, we give a complete classification of totally geodesic Lagrangian submanifolds of this space. 
\end{abstract}
\maketitle
\section{Introduction}

\vspace{3 ex}

The concept of a nearly Kähler manifold was introduced in 1959 by Tachibana in \cite{tachibana}. In the 1970s, Gray extended the study of nearly Kähler manifolds in \cite{gray1,gray2}  and together with Hervella they showed the importance of these spaces in \cite{gray3}. Later on, the work of Nagy \cite{nagy} exhibited the special role of homogeneous six-dimensional nearly Kähler manifolds.
It was not until 2002 that Butruille \cite{butruille} classified, up to isometries, all homogeneous Riemannian six-dimensional strictly nearly Kähler manifolds, these being $\Ss^6$, $\Ss^ 3\times \Ss^ 3$, $\mathbb{C}P^3$ and the space $F(\mathbb{C}^3)$ of full flags in $\mathbb{C}^3$.
Six similar examples of pseudo-Riemannian nearly Kähler manifolds (see diagram below), or pseudo-nearly Kähler manifolds for short, were introduced in \cite{ines,Schafer}, but a complete classification has not been found yet.

\[
\begin{tikzcd}[ampersand replacement=\&, row sep=small, column sep=small]
    \Ss^3\times\Ss^3\ar[dd] \&
    \Ss^6\ar[dd] \&
    \mathbb{C}P^3\arrow[dr, to path=|- (\tikztotarget)]
    \arrow[ddr, to path=|- (\tikztotarget)]
    \&
    \&
    F(\mathbb{C}^3)\arrow[dr, to path=|- (\tikztotarget)]
    \arrow[ddr, to path=|- (\tikztotarget)] \& \\
    \&
    \&
    \&
    \frac{\SO^+(2,3)}{\U(1,1)} \&  
    \&
    \frac{\SU(2,1)}{\U(1)\times\U(1)}  \\
    \Sl\times\Sl \&
    \Ss^6_4 \&
     \&
    \frac{\SO^+(4,1)}{\U(2)}\&
     \&
    \frac{\Slt}{\R^* \times \SO(2)}
\end{tikzcd}
\]
Here, $\Ss^6_4$ means the elements of $\R^7_4$ with length 1. The group $\SO(p,q)$ is not connected in general, thus the symbol $+$ represents the connected component of the identity. 

    
The pseudo-nearly Kähler $\Sl\times\Sl$ is an analogue of $\Ss^ 3\times \Ss^ 3$, as the split quaternions can be used to replace the quaternions to describe its geometric structure. 

When studying submanifolds of an almost Hermitian manifold, two types of submanifolds attract our attention: the almost complex and the totally real submanifolds. 
The former are those submanifolds whose tangent spaces are preserved by the almost complex structure of the ambient space. 
The latter are those whose tangent spaces are mapped into the normal spaces by the almost complex structure of the ambient space.
If a totally real submanifold has half the dimension of the ambient manifold, it is called a Lagrangian submanifold. 

In \cite{wang} the authors showed that any Lagrangian submanifold of a six-dimensional strictly nearly Kähler manifold that has parallel second fundamental form, must have vanishing second fundamental form, i.e. it has to be totally geodesic. The tools the authors used, can also be applied in the pseudo-Riemannian case (see \cite{Schafer}). In the same paper, the authors gave a complete classification of totally geodesic Lagrangian submanifolds of the nearly Kähler $\Ss^ 3\times \Ss^ 3$.

In this work, we study Lagrangian submanifolds of the pseudo-nearly Kähler $\Sl\times\Sl$ and we divide them in four different groups, according to their behavior with respect to an almost product structure on $\Sl\times\Sl$.
If we add the condition of being totally geodesic, we find that these submanifolds can only be part of the first group, also called Lagrangian submanifolds of diagonalizable type.
The full classification of totally geodesic Lagrangian submanifolds of $\Sl\times\Sl$ is as follows.
\begin{theorem}\label{maintheorem}
Any totally geodesic Lagrangian submanifold of the pseudo-nearly Kähler $\Sl\times\Sl$ is congruent to the image of one of the following maps, possibly restricted to an open subset:

\begin{enumerate}
    \item $f\colon\Sl\to\Sl\times\Sl\colon u\mapsto(\id_2,u)$, \label{map1}
        \item $f\colon\Sl\to\Sl\times\Sl\colon u\mapsto(u,\ii u\ii)$, \label{map2}
    \item $f\colon\Sl\to\Sl\times\Sl\colon u\mapsto(u,\kk u\kk)$, \label{map3}
\end{enumerate}
where $\id_2,\ii,\kk$ are the matrices
\[
\id_2=\begin{pmatrix}
1&0\\
0&1\\
\end{pmatrix}, \ \ \ \ \ \ \ii=\begin{pmatrix}
0&1\\
-1&0\\
\end{pmatrix}, \ \ \ \ \ \
\kk=\begin{pmatrix}
1&0\\
0&-1
\end{pmatrix}.
\]

Conversely, the maps \emph{(\ref{map1})}, \emph{(\ref{map2})} and \emph{(\ref{map3})} are totally geodesic Lagrangian immersions.
\end{theorem}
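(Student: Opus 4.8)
The plan is to treat the two implications separately, using the four-class splitting of Lagrangian submanifolds together with the explicit model of the nearly Kähler structure on $\Sl\times\Sl$, in which tangent vectors at a point are identified with pairs of elements of $\slf$ and the almost complex structure $J$, the metric $g$ and the almost product structure $P$ are all given by explicit linear formulas; recall in particular that $P^2=\id$ and $PJ=-JP$, so that $P$ is a $g$-self-adjoint isometric involution anticommuting with $J$.

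For the converse I would, for each of the three maps $f$, first differentiate in this identification to read off $df$ and check its injectivity, so that $f$ is an immersion. The image $df(T_u\Sl)$ is a three-dimensional subspace described by an explicit pair-valued expression, and substituting it into the formulas for $J$ and $g$ reduces the Lagrangian condition $g(J\,df(X),df(Y))=0$ to a short algebraic identity in $\slf$ involving $\ii$ and $\kk$. To obtain total geodesy I would either compute the ambient covariant derivative $\tilde\nabla_{df(X)}df(Y)$ and verify that its normal component vanishes, or, more conceptually, realize each image as a connected component of the fixed-point set of a structure-preserving isometric involution of $\Sl\times\Sl$ (the involutions induced on the factors by inversion and by conjugation with $\ii$ and $\kk$), so that total geodesy is automatic.

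For the classification, let $L$ be a totally geodesic Lagrangian submanifold. The first step is to locate $L$ among the four classes. Restricting $P$ to $TL$ and splitting it, via $NL=J\,TL$, into a tangential endomorphism $A$ and a normal part, the relations $P^2=\id$ and $PJ=-JP$ become algebraic identities for $A$, and the class of $L$ is governed by the algebraic (Segre) type of $A$. I would then write the Gauss and Codazzi equations for a Lagrangian submanifold of a nearly Kähler six-manifold, in which the totally symmetric cubic form $g(h(\cdot,\cdot),J\cdot)$ and the nearly Kähler tensor $\tilde\nabla J$ appear, and impose $h=0$. This annihilates the cubic form and collapses the equations to the statements that $L$ is curvature-invariant and that the derivatives of $A$ are controlled purely by $\tilde\nabla J$. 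Analyzing the resulting first-order system forces the algebraic invariants of $A$ (its eigenvalues, equivalently the associated angle functions) to be constant, and checking the full set of structure equations then rules out every class but the diagonalizable one, in which a nonzero curvature term no longer obstructs $h=0$.

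Once $L$ is of diagonalizable type, the adapted frame furnished by the eigendistributions of $A$ turns the immersion into a completely integrable first-order linear system, which I would integrate and then normalize by the symmetry group of the nearly Kähler structure to land on one of the three model maps. The indefinite, split-quaternionic signature is precisely what produces one model more than in the $\Ss^3\times\Ss^3$ case: the self-adjoint operator $A$ can carry an eigenvalue attached to the timelike direction $\ii$, with $\ii^2=-\id_2$, or to the spacelike direction $\kk$, with $\kk^2=\id_2$, and these yield the two genuinely distinct conjugation maps $u\mapsto\ii u\ii$ and $u\mapsto\kk u\kk$. The main obstacle I anticipate is exactly this signature: over an indefinite metric a $g$-self-adjoint endomorphism such as $A$ need not be diagonalizable over $\R$, since it may acquire a pair of complex-conjugate eigenvalues or a nontrivial Jordan block, so the crux of the argument is the careful case distinction by the Segre type of $A$ and the verification that only the real-diagonalizable type survives the totally geodesic constraint; this is where the four-class framework earns its keep and where the extra case relative to $\Ss^3\times\Ss^3$ emerges.
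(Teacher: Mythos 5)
Your plan follows essentially the same route as the paper: decompose $P|_M$ into tangential and normal parts, use the Lorentzian normal-form classification of the resulting commuting symmetric operators to split Lagrangian submanifolds into four types, kill the non-diagonalizable types with the Codazzi equation after imposing $h=0$, deduce constancy of the angle functions from the relation between $h$ and the derivatives of the angles, and integrate the adapted frame equations before normalizing by the isometry group. The only divergence is minor: for total geodesy of the three model maps the paper argues via the angle functions (two angles being equal forces $h=0$), whereas you propose either a direct computation or a fixed-point-set argument; both work, the latter being slickest for the first model, whose image is precisely the fixed-point set of the isometry $(p,q)\mapsto(p^{-1},qp^{-1})$.
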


In \cite{wang}, the authors stated that any totally geodesic Lagrangian submanifold of $\Ss^3\times\Ss^3$ is congruent to an immersion of a list of 6 examples. 
This classification can be simplified, using isometries equivalent to the ones given in \eqref{isoslsl}. 
This way, the list is reduced to just two examples, similar to immersions \eqref{map1} and \eqref{map2} of Theorem \ref{maintheorem}. 
Hence, immersion \eqref{map3} is a new example, which arises from the pseudo-Riemannian nature of $\Sl\times\Sl$.

We can understand the geometry of the immersions of Theorem \ref{maintheorem} via the identification of $\Sl$ with the anti-de Sitter space $H_1^3(-1)$; namely via the map  
\begin{equation*}
    H_1^3(-1)\subset \R_2^4\to\Sl:(x_0,x_1,x_2,x_3)\mapsto \begin{pmatrix}
    x_0-x_2 &x_3-x_1\\
    x_3+x_1 & x_0+x_2
\end{pmatrix},
\end{equation*} which is an isometry between $H^3_1(-1)$ and $\Sl$ with the metric introduced in Equation \eqref{prodsl2} below. Here, $\R^4_2$ denotes $\R^4$ equipped with the indefinite inner product 
\[\li x,y\ri=-x_0y_0-x_1y_1+x_2y_2+x_3y_3,\] 
and the three-dimensional anti-de Sitter space with constant sectional curvature $c<0$ is defined as $H^3_1(c)=\{x\in\R^4:\li x,x\ri=1/c\}$.

Note that the three immersions of Theorem \ref{maintheorem} induce essentially different Riemannian structures on $\Sl$. The first immersion induces a metric with constant sectional curvature, that is homothetic to the standard metric. The second immersion turns $\Sl$ into an anti-de Sitter space with a Berger-like metric stretched in a timelike direction, and the third immersion turns it again into an anti-de Sitter space with a Berger-like metric, but now stretched in a spacelike direction. These metrics have been studied more generally in \cite{calvaruso_helix_2023} and \cite{calvaruso_metrics_2014}.

The paper is organised as follows. In Section \ref{preliminaries} we recall the nearly Kähler structure of $\Sl\times\Sl$ and we give a comparison with the product metric. In Section \ref{lagrangiansubmanifolds} we state some properties of Lagrangian submanifolds and we divide them into four groups.
Finally, in Section \ref{totallygeodesic} we classify, up to congruence, all totally geodesic Lagrangian submanifolds of the nearly Kähler $\Sl\times\Sl$.

\section{The pseudo-nearly Kähler \texorpdfstring{$\Sl\times\Sl$}{SL(2,R)xSl(2,R)}} \label{preliminaries}
We recall the geometric structure of the pseudo-nearly Kähler $\Sl\times\Sl$. Most of this section is based on \cite{Ghandour}.
\subsection{Nearly Kähler manifolds}  A nearly Kähler manifold is an almost Hermitian manifold for which the tensor $G(X,Y)=(\tilde{\nabla}_XJ)Y$ is skew symmetric. Here, $J$ is the almost complex structure and $\tilde{\nabla}$ is the Levi-Civita connection. For a (pseudo-)nearly Kähler manifold, the tensor $G$ satisfies
\begin{equation}
    g(G(X,Y),Z)+g(G(X,Z),Y)=0, \ \ \ \ \ G(X,JY)+JG(X,Y)=0,\label{nkprop}
\end{equation}
where $g$ is the metric and $X$, $Y$ and $Z$ are vector fields on the manifold.
As an immediate consequence we have 
\begin{equation}
    g(G(X,Y),JZ)+g(G(X,Z),JY)=0.
    \label{gnormal}
\end{equation}

\subsection{The manifold \texorpdfstring{$\Sl$}{SL(2,R)}}Now consider the following non-degenerate indefinite inner product on $\R^4$:
\begin{equation*}
    \langle a,b\rangle =-\frac{1}{2}(a_1b_4-a_2b_3-a_3b_2+a_4b_1). 
\end{equation*}
We can identify the $2\times 2$ real matrices space $M(2,\R)$ with $\R^4$, so the above inner product can be seen as 
\begin{equation}
    \langle a,b\rangle=-\frac{1}{2}\operatorname{Trace}(\operatorname{adj}(a)b),\label{prodsl2}
\end{equation}
where $\operatorname{adj}(a)$ is the adjugate matrix of $a$. 
The real special linear group $\Sl$ is the set of all the $2\times 2$ real matrices with determinant 1, which turns out to be 
\[
\Sl=\{a\in M(2,\R):\langle a,a\rangle=-1\},
\]
thus $\Sl$ inherits the Lorentzian metric in \eqref{prodsl2} from $M(2,\R)\cong\R^4_2$, which has constant sectional curvature $-1$. This way, we can think of $\Sl$ as the anti-de Sitter space $H_1^{3}(-1)$. The tangent space of $\Sl$ at a matrix $a$ can be written as
\begin{equation}
T_a\Sl =\{ a\alpha: \alpha\in \mathfrak{sl}(2,\R)\},    \label{tangentspace}
\end{equation}
where $\mathfrak{sl}(2,\R)$ is the Lie algebra of $\Sl$, namely the set of all the matrices in $M(2,\R)$ with vanishing trace.

The three-dimensional vector space $\mathfrak{sl}(2,\R)$ is spanned by the pseudo-orthonormal basis composed by the so called split quaternions $\ii,\jj,\kk$, given by
\begin{equation*}
\ii=\begin{pmatrix}
0&1\\
-1&0\\
\end{pmatrix}, \ \ \ \ \ \ \jj=\begin{pmatrix}
0&1\\
1&0\\
\end{pmatrix}, \ \ \ \ \ \
\kk=\begin{pmatrix}
1&0\\
0&-1
\end{pmatrix}.\label{splitquaterions}
\end{equation*}

\subsection{The nearly Kähler structure on \texorpdfstring{$\Sl\times\Sl$}{SL(2,R)xSL(2,R)}}
We define an almost complex structure $J$ on $\Sl\times\Sl$ by

\[
J(a\alpha,b\beta)=\frac{1}{\sqrt{3}} (a(\alpha-2\beta),b(2\alpha-\beta)),
\]
for $\alpha,\beta\in\mathfrak{sl}(2,\R)$. It is well known that with any given (indefinite) metric and almost complex structure we can construct a metric in such a way that the almost complex structure is compatible with the new metric. Starting from the product metric, we get an explicit expression for the new metric:
\begin{equation*}
    g((a\alpha,b\beta),(a\gamma,b\delta))=\frac{2}{3}\langle(a\alpha,b\beta),(a\gamma,b\delta)\rangle-\frac{1}{3}\langle(a\beta,b\alpha),(a\gamma,b\delta)\rangle,
\end{equation*}
for $\alpha,\beta,\gamma,\delta\in\mathfrak{sl}(2,\R) $, and $\li,\ri$ is the product metric of $\Sl\times\Sl$ associated to the metric of $\Sl$ given in (\ref{prodsl2}).

As above we denote the covariant derivative of $J$ by $G$ and we can see that $G$ is skew symmetric, so $\Sl\times\Sl$ together with $J$ and $g$ forms a pseudo-nearly Kähler manifold. As a consequence $G$ satisfies (\ref{nkprop}).

\subsection{The almost product structure \texorpdfstring{$P$}{P}}
Consider the almost product structure $P$ on $\Sl\times\Sl$ defined by
\begin{equation}
    P(a\alpha,b\beta)=(a\beta,b\alpha). \label{prodstructuredef}
\end{equation}

The tensor $P$ has the following properties:

\begin{equation}
    \begin{split}
        &P^2=\id, \ \ \ \ \ \ \ \ \ \ \ \ \ \ \ \ \ \ \ \ \ g(PX,PY)=g(X,Y),\\
        &PJ=-JP, \ \ \ \ \ \ \ \ \ \ \ \ \ \ \ \ \ g(PX,Y)=g(X,PY),
    \end{split}\label{proppe}
\end{equation}
for any vector fields $X,Y$ on $\Sl\times\Sl$.

The tensor $G$ has an explicit expression which is given in the following proposition.
\begin{proposition}[Proposition 3.2 in \cite{Ghandour}]
Let $X=(a\alpha,b\beta),Y=(a\gamma,b\delta)\in T_{(a,b)}(\Sl\times\Sl)$. Then
\begin{equation*}
    G(X,Y)=\frac{2}{3\sqrt{3}}(a(-\alpha\times\gamma-\alpha\times\delta+\gamma\times\beta+2\beta\times\delta),b(-2\alpha\times\gamma+\alpha\times\delta-\gamma\times\beta+\beta\times\delta)),
\end{equation*}
where $\times$ is defined on $\mathfrak{sl}(2,\R)$ by $\alpha\times\beta=\frac{1}{2}(\alpha\beta-\beta\alpha)$.
\label{tensorg}
\end{proposition}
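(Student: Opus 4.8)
The plan is to reduce the statement to a purely algebraic computation on $\slf\oplus\slf$ by exploiting left-invariance. For $(\alpha,\beta)\in\slf\oplus\slf$ let $E_{(\alpha,\beta)}$ denote the left-invariant vector field on $\Sl\times\Sl$ whose value at $(a,b)$ is $(a\alpha,b\beta)$; by \eqref{tangentspace} these span every tangent space. Since $G=\nabla J$ is a $(1,2)$-tensor, it suffices to evaluate $G(E_{(\alpha,\beta)},E_{(\gamma,\delta)})$ on such fields. All three ingredients are left-invariant: the metric satisfies
\[
g(E_{(\alpha,\beta)},E_{(\gamma,\delta)})=\tfrac23\bigl(\li\alpha,\gamma\ri+\li\beta,\delta\ri\bigr)-\tfrac13\bigl(\li\beta,\gamma\ri+\li\alpha,\delta\ri\bigr),
\]
which is constant in $(a,b)$; the almost complex structure acts as the constant endomorphism $E_{(\alpha,\beta)}\mapsto\tfrac1{\sqrt3}E_{(\alpha-2\beta,\,2\alpha-\beta)}$; and the bracket of left-invariant fields is $[E_{(\alpha,\beta)},E_{(\gamma,\delta)}]=E_{([\alpha,\gamma],[\beta,\delta])}$ with $[\alpha,\gamma]=2\,\alpha\times\gamma$. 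Consequently $G$ is itself left-invariant, so it is enough to compute its frame components once (say at $(\id,\id)$), reducing everything to bilinear algebra in $(\alpha,\beta,\gamma,\delta)$.

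First I would determine the Levi-Civita connection $\nabla$ of $g$ on left-invariant fields. Because $g$ is left-invariant, the Koszul formula collapses to brackets only,
\[
2\,g(\nabla_XY,Z)=g([X,Y],Z)-g([Y,Z],X)+g([Z,X],Y),
\]
for left-invariant $X,Y,Z$. Inserting the frame brackets and the explicit $g$, the right-hand side becomes an explicit bilinear expression for every frame field $Z$; solving for $\nabla_XY$ uses only non-degeneracy of $g$. The key point is that $g$ is the $P$-twist $g=\tfrac23\li\cdot,\cdot\ri-\tfrac13\li P\cdot,\cdot\ri$ of the product metric, so its inverse, and hence $\nabla$, is \emph{not} block-diagonal with respect to the two factors; this mixing is precisely what will generate the cross terms $\alpha\times\delta$ and $\gamma\times\beta$ in the final formula.

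With $\nabla$ in hand I would compute $G(X,Y)=\nabla_X(JY)-J\,\nabla_XY$. Since $J$ is a constant endomorphism of the frame, $JY$ is again left-invariant, so both summands are evaluated by the connection formula already obtained, and $G(X,Y)$ comes out as a left-invariant field whose two $\slf$-components are bilinear in $(\alpha,\beta,\gamma,\delta)$. Rewriting each commutator via $\alpha\times\gamma=\tfrac12[\alpha,\gamma]$ and regrouping should reproduce the stated expression together with the scalar $\tfrac{2}{3\sqrt3}$. As a sanity check I would verify that the resulting components are skew-symmetric in $X,Y$ and satisfy $G(X,JY)=-JG(X,Y)$, in accordance with \eqref{nkprop}, and are consistent with the relations between $P$ and $J$ recorded in \eqref{proppe}.

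The main obstacle is bookkeeping rather than anything conceptual: the off-diagonal $-\tfrac13$ blocks coming from $P$ force one to invert the full Gram matrix of $g$ (equivalently, to track how each factor feeds into the other through the Koszul formula), and then to collapse a sizeable collection of products $\alpha\times\gamma$, $\alpha\times\delta$, $\beta\times\gamma$, $\beta\times\delta$ into the four coefficients of the Proposition. Organizing the computation at $(\id,\id)$ in the pseudo-orthonormal basis $\ii,\jj,\kk$ — for which $\times$ is the associated Lorentzian cross product — keeps the coefficient matching transparent and controls the risk of sign errors.
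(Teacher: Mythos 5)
The paper itself does not prove this proposition: it is imported verbatim as Proposition 3.2 of the cited work of Ghandour et al., so there is no internal argument to compare against. Your plan is nonetheless the standard and correct way to establish the formula, and it is essentially the computation carried out in that reference: the fields $E_{(\alpha,\beta)}$ are left-invariant, $\langle a\alpha,a\gamma\rangle=\langle\alpha,\gamma\rangle$ because $\det a=1$, so $g$, $J$ and the brackets $[E_{(\alpha,\beta)},E_{(\gamma,\delta)}]=E_{([\alpha,\gamma],[\beta,\delta])}$ with $[\alpha,\gamma]=2\,\alpha\times\gamma$ are all constant on the frame, the Koszul formula collapses to its three bracket terms, and $G=\nabla J$ comes out left-invariant and bilinear in $(\alpha,\beta,\gamma,\delta)$. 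You also correctly identify the one genuinely delicate point, namely that the $-\tfrac13\li P\cdot,\cdot\ri$ term makes the Gram matrix of $g$ non-block-diagonal, which is exactly what produces the mixed terms $\alpha\times\delta$ and $\gamma\times\beta$. The only shortfall is that the proposal remains an outline: you never actually invert that Gram matrix, solve the Koszul identity for $\nabla_XY$, or collapse the resulting commutators, so the specific coefficients $\tfrac{2}{3\sqrt3}(-1,-1,1,2)$ and $\tfrac{2}{3\sqrt3}(-2,1,-1,1)$ are asserted rather than derived. Completing that bilinear algebra (most conveniently in the basis $\ii,\jj,\kk$, together with your proposed checks of skew-symmetry, of $G(X,JY)=-JG(X,Y)$, and of \eqref{pyg}) is routine but indispensable, since the entire content of the proposition is those constants.
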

Notice that using this proposition now we can easily see that 
\begin{equation}
    PG(X,Y)+G(PX,PY)=0.
    \label{pyg}
\end{equation}
Also from \cite{Ghandour} we see that $\Sl\times\Sl$ has constant type $-\tfrac{2}{3}$. That is 
\begin{equation}
    \begin{split}
     g(G(X,Y),G(Z,W))&=-\tfrac{2}{3}\big(g(X,Z)g(Y,W)-g(X,W)g(Y,Z)\\
     &\quad+g(JX,Z)g(Y,JW)-g(JX,W)g(Y,JZ)\big).
    \end{split}
    \label{constanttype}
    \end{equation}
We denote by $\tilde{\nabla}$ the Levi-Civita connection on $\Sl\times\Sl$ associated to $g$.  The curvature tensor $\tilde{R}$ of $\Sl\times\Sl$ associated to $\tilde{\nabla}$ is given by
\begin{equation}
        \begin{split}
            \Tilde{R}(U,V)W&=-\tfrac{5}{6}\Big(g(V,W)U-g(U,W)V\Big)\\
            &\quad-\tfrac{1}{6}\Big(g(JV,W)JU-g(JU,W)JV-2g(JU,V)JW\Big)\\
            &\quad-\tfrac{2}{3}\Big(g(PV,W)PU-g(PU,W)PV\\
            &\quad+g(JPV,W)JPU-g(JPU,W)JPV\Big).\label{curv}
        \end{split}
    \end{equation}

\subsection{The isometries of \texorpdfstring{$\Sl \times\Sl$}{SL(2,R)xSL(2,R)}} The contents of this subsection cannot be found in the literature. However, Moruz and Vrancken showed in \cite{properties} a similar result for $\Ss^3\times\Ss^3$, which can be reproduced for $\Sl\times\Sl$.

The nearly Kähler metric $g$ may also be defined from a homogeneous point of view. 
Consider the triple product $\Sl\times\Sl\times\Sl$ with the product metric that arises from the one in \eqref{prodsl2}, and the submersion 
$\pi:\Sl\times\Sl\times\Sl\to\Sl\times\Sl$ given by $\pi(a,b,c)=(ac^{-1},bc^{-1})$. 
This way, $g$ is the metric on $\Sl\times\Sl$ that makes $\pi$ into pseudo-Riemannian submersion.

The connected component of the isometry group of $\Sl\times\Sl$ is 
\[\isoo(\Sl\times\Sl)=\Sl\times\Sl\times\Sl,\] 
where an element $\phi_{(a,b,c)}$ acts on a point $(p,q)$ by $\phi_{(a,b,c)}(p,q)=(apc^{-1},bqc^{-1})$.
 Consequently, $\Sl\times\Sl$ is a homogeneous pseudo-Riemannian manifold:
\[
    \Sl\times\Sl=\frac{\Sl\times\Sl\times\Sl}{\Delta\Sl},
\]
where $\Delta \Sl=\{(a,a,a):a\in\Sl\}$.

From permutations of the factors of $\Sl\times\Sl\times\Sl$ we obtain six different isometries of $\Sl\times\Sl$:
\begin{equation}
    \label{isoslsl}
    \begin{alignedat}{2}
        &\Psi_{0,0}(p,q)=(p,q), 
        &&\Psi_{1,0}(p,q)=(q,p),\\
        &\Psi_{0,2\pi/3}(p,q)=(p q^{-1},q^{-1}),
        &&\Psi_{1,2\pi/3}(p,q)=(q^{-1},p q^{-1}),\\
        &\Psi_{0,4\pi/3}(p,q)=(q p^{-1},p^{-1}),\qquad\qquad
        &&\Psi_{1,4\pi/3}(p,q)=(p^{-1},q p^{-1}).
    \end{alignedat}
\end{equation}
Each one of these isometries is in a different connected component of $\iso(\Sl\times\Sl)$ and satisfies
\[
    J \circ d\Psi_{\kappa,\tau}=(-1)^\kappa  d\Psi_{\kappa,\tau}\circ J,
\ \ \ \ \ \ \ P\circ d \Psi_{\kappa,\tau}=d\Psi_{\kappa,\tau}\circ(\cos\tau P+\sin \tau J P).
\]


    
    \subsection{Comparison with the product metric}
Let $\li,\ri$ be the product metric associated to the metric given in (\ref{prodsl2}), with Levi-Civita connection $\nabla^E$. Here $E$ stands for Euclidean, as the product metric is inherited from $\R^8_4$. We can write $\li,\ri$ in terms of the nearly Kähler metric $g$ and the almost product structure $P$ as
\begin{equation}
    \li X,Y\ri=2 g(X,Y)+g(X,PY), \label{prodmetric}
\end{equation}
    and the connection $\nabla^E$ in terms of the pseudo-nearly Kähler connection $\tilde\nabla$, $J,P$ and $G$ as
\begin{equation}
    \nabla^E_XY=\tilde{\nabla}_XY+\frac{1}{2}(JG(X,PY)+JG(Y,PX)).\label{relprodkal}
\end{equation}
A natural almost product structure for a product manifold that is compatible with the product metric is $Q$, given by
\[
Q(X_1,X_2)=(-X_1,X_2).
\]
Both product structures are related by
\begin{equation}
    QX=-\frac{1}{\sqrt{3}}(2PJX-JX).\label{prodQ}
\end{equation}
Let $D$ be the Euclidean connection for $\R^8_4$, then for $(a,b)\in\Sl\times\Sl$ we have:
\begin{equation*}
\begin{split}
        D_XY&=\nabla_X^EY+\frac{\langle D_XY,(a,b)\rangle}{\langle (a,b),(a,b)\rangle}(a,b)+\frac{\langle D_XY,(-a,b)\rangle}{\langle (-a,b),(-a,b)\rangle}(-a,b)\\
        &=\nabla_X^EY-\frac{1}{2}\langle D_XY,(a,b)\rangle(a,b)-\frac{1}{2}\langle D_XY,(-a,b)\rangle(-a,b).\\
\end{split}
\end{equation*}
As $\langle Y,(a,b)\ri=0$, this implies
\begin{equation}
\begin{split}
        D_XY&=\nabla_X^EY+\frac{1}{2}\langle X,Y\rangle(a,b)+\frac{1}{2}\langle Y,QX\rangle(-a,b). \label{connectionr8}
\end{split}
\end{equation}

\section{Lagrangian submanifolds of \texorpdfstring{$\Sl\times\Sl$}{SL(2,R)xSL(2,R)}}\label{lagrangiansubmanifolds}

We start by recalling some general facts from submanifold theory.
Let $f:M\to N$ be a non-degenerate pseudo-Riemannian immersion. The Gauss formula gives a relation between the Levi-Civita connection of the ambient space and the Levi-Civita connection of the submanifold as follows:
\begin{equation}
    \tilde{\nabla}_XY=\nabla_XY+h(X,Y),\label{gaussformula}
\end{equation}
where $X,Y$ are vector fields on $M$ and $h$ is a symmetric bilinear normal form called the second fundamental form. 
If $h$ vanishes everywhere then $M$ is said to be totally geodesic.

 A relation between $\tilde{\nabla}$ and the normal connection, is provided by the Weingarten formula:
 \[
 \tilde{\nabla}_X\xi=-S_{\xi}X+\nabla_X^{\bot}\xi
 \]
where $X$ and $\xi$ are tangent and normal vector fields on $M$, respectively. For a normal vector field $\xi$, the tensor $S_\xi$ is a symmetric (with respect to $g$) tensor called the shape operator, which satisfies $g(S_\xi X,Y)=g(h(X,Y),\xi)$ and it is linear at $\xi$ and $X$.

The Gauss and Codazzi equations are given by
\begin{equation*}
    \begin{split}
        \left(\tilde{R}(X,Y)Z\right)^{\top}&=R(X,Y)Z+S_{h(X,Z)}Y-S_{h(Y,Z)}X,\\
        \left(\tilde{R}(X,Y)Z\right)^{\bot}&=(\overline{\nabla}_Xh)(Y,Z)-(\overline{\nabla}_Yh)(X,Z),
    \end{split}
\end{equation*}
where $(\overline{\nabla}_Xh)(Y,Z)=\nabla_X^{\bot}h(Y,Z)-h(\nabla_XY,Z)-h(Y,\nabla_XZ)$ and $\tilde{R},R$ are the curvature tensors of $N$ and $M$, respectively.

Let us now consider Lagrangian submanifolds. 
A submanifold $M^n$ of an almost Hermitian manifold $(N^{2n},g,J)$ is said to be Lagrangian if $J$ maps tangent spaces of $M$ into normal spaces, and vice versa.
We will also assume that the submanifold is non-degenerate, i.e. there does not exist any vector field $X$ on $M$ such that $g(X,Y)=0$ for all vector fields $Y$ on $M$.

Given a Lagrangian immersion into a (pseudo-)nearly Kähler manifold,  we extract from \cite{Schafer} some properties of the tensor $G$ and the second fundamental form:
\begin{equation}
    \begin{split}
    g(G(X,Y),Z)&=0,\\
    g(h(X,Y),JZ)&=g(h(X,Z),JY).\\
    \end{split}
    \label{lagrprop}
\end{equation}

The proof of the following result can be found in \cite{Schafer} as well.
\begin{proposition}
Any Lagrangian submanifold of a six-dimensional strictly pseudo-nearly Kähler  \hyphenation{ma-ni-fold} manifold is orientable and minimal. \label{minimal}
\end{proposition}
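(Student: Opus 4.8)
The plan is to treat the two assertions separately, dealing first with the (formal) orientability and then with minimality, which is the genuinely analytic part.

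\emph{Orientability.} Since $M$ is Lagrangian, $J$ restricts to a bundle isomorphism $TM\to T^{\perp}M$, so every normal vector is of the form $JX$ with $X$ tangent; and by the first identity in (\ref{lagrprop}) the vector $G(X,Y)$ is normal for tangent $X,Y$. I would therefore define the $3$-covariant tensor
\[
\Omega(X,Y,Z)=g(G(X,Y),JZ)
\]
on $M$ and check that it is alternating: skew-symmetry in $X,Y$ is the skew-symmetry of $G$, while skew-symmetry in $Y,Z$ is exactly (\ref{gnormal}), and these two together force full antisymmetry. To see that $\Omega$ is nowhere zero, fix a pseudo-orthonormal frame $e_1,e_2,e_3$ of $TM$. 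Applying (\ref{gnormal}) with $Z=e_1$ and with $Z=e_2$ gives $g(G(e_1,e_2),Je_1)=g(G(e_1,e_2),Je_2)=0$, so $G(e_1,e_2)$ is a multiple of $Je_3$. Its square-norm is computed from the constant type (\ref{constanttype}) with $Z=X,\,W=Y$; since $g(Je_i,e_j)=0$ the cross terms drop and one obtains $g(G(e_1,e_2),G(e_1,e_2))=-\tfrac23\big(g(e_1,e_1)g(e_2,e_2)-g(e_1,e_2)^2\big)\neq0$. Hence $G(e_1,e_2)=c\,Je_3$ with $c\neq0$ and $\Omega(e_1,e_2,e_3)=\pm c\neq0$. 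A nowhere-vanishing $3$-form on the $3$-manifold $M$ is a volume form, so $M$ is orientable.

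\emph{Minimality.} I would first record the consequence of the Gauss and Weingarten formulas obtained by splitting $\tilde{\nabla}_X(JY)$ into tangential and normal parts. Writing $\tilde{\nabla}_X(JY)=(\tilde{\nabla}_XJ)Y+J\tilde{\nabla}_XY=G(X,Y)+J\nabla_XY+Jh(X,Y)$ and using that $G(X,Y)$ and $J\nabla_XY$ are normal while $Jh(X,Y)$ is tangent, the tangential part yields
\[
S_{JY}X=-Jh(X,Y),
\]
equivalently $g(S_{JY}X,Z)=C(X,Y,Z)$ for the cubic form $C(X,Y,Z)=g(h(X,Y),JZ)$, which is totally symmetric by symmetry of $h$ and the second line of (\ref{lagrprop}). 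Minimality is then equivalent to the trace condition $\sum_i\epsilon_i\,C(e_i,e_i,Z)=0$, i.e.\ to the vanishing of the mean curvature form $\mu(Z)=g(H,JZ)$.

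To force $\mu\equiv0$ I would contract the Codazzi equation over a pseudo-orthonormal tangent frame and pair with $JW$. On the right this produces $\nabla^{\bot}_YH$ minus a divergence of $h$; on the left it produces a Ricci-type contraction of the ambient curvature along $M$. The decisive inputs are that a six-dimensional strictly nearly Kähler manifold is Einstein and of constant type, so that in the curvature tensor (here the one in (\ref{curv})) every purely metric and purely $J$ term contributes nothing to the $JW$-component along $M$, because $g(JX,Z)=0$ for tangent $X,Z$; the remaining terms are then pinned down by (\ref{constanttype}) and, in the present ambient, by the compatibility (\ref{pyg}) between $P$ and $G$. Combined with the first-order relation $S_{JY}X=-Jh(X,Y)$ and the second-order nearly Kähler identity for $\tilde{\nabla}G$, the traced Codazzi equation should collapse to an identity forcing $\mu=0$ pointwise. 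I expect this last step to be the main obstacle: one must carry out the curvature contraction honestly, keep careful track of the signs $\epsilon_i$ coming from the indefinite induced metric, and verify that the $G$-quadratic contributions (and, in the $\Sl\times\Sl$ case, the $P$- and $JP$-terms of (\ref{curv})) cancel. By contrast, the orientability argument and the reduction of minimality to trace-freeness of the totally symmetric cubic form are purely formal and follow at once from (\ref{lagrprop}), (\ref{gnormal}) and (\ref{constanttype}).
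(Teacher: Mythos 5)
First, a remark on the comparison itself: the paper does not prove this proposition at all — it defers entirely to \cite{Schafer} — so there is no in-paper argument to match yours against. Your orientability argument is correct and is the standard one: $\Omega(X,Y,Z)=g(G(X,Y),JZ)$ is alternating by (\ref{gnormal}) together with the skew-symmetry of $G$, and nowhere zero by (\ref{constanttype}) combined with $g(JX,Y)=0$ for tangent $X,Y$ and the assumed non-degeneracy of the induced metric, which makes the Gram determinant $g(e_1,e_1)g(e_2,e_2)-g(e_1,e_2)^2$ nonzero. Likewise, the identity $S_{JY}X=-Jh(X,Y)$, the total symmetry of the cubic form $C(X,Y,Z)=g(h(X,Y),JZ)$, and the reduction of minimality to the vanishing of $\mu(Z)=g(H,JZ)$ are all correct and purely formal.

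The gap is in the last step, and it is essential. Tracing the Codazzi equation cannot yield $H=0$: Codazzi is first order in $h$, so its contraction over a pseudo-orthonormal frame produces a relation between $\nabla^{\perp}H$, the divergence of $h$, and the normal part of a Ricci-type contraction of $\tilde{R}$ — in the ambient $\Sl\times\Sl$ that normal part is precisely the right-hand side of (\ref{Codazzi}) and is generically nonzero because of the $P$-terms, while for a general six-dimensional strictly pseudo-nearly Kähler manifold (which is what the proposition is about) you cannot invoke (\ref{curv}) at all. At best this route gives a differential identity satisfied by $H$; it cannot produce the pointwise algebraic statement $\mu=0$. The standard proof, the one in \cite{Schafer}, instead exploits the very $3$-form you built for orientability: by (\ref{constanttype}) the value of $\Omega$ on any pseudo-orthonormal tangent frame is a universal constant, so $\Omega$ is a constant nonzero multiple of the induced volume form and hence parallel for the induced connection; computing $\nabla\Omega$ via the Gauss formula and the nearly Kähler identity for $\tilde{\nabla}G$ (whose purely metric contribution cancels against the term $g(G(X,Y),G(W,Z))$ coming from constant type) leaves a purely algebraic identity in $h$, whose trace is exactly $\mu=0$. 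In other words, the two halves of the proposition are not independent: the $3$-form that orients $M$ is also the engine of the minimality proof, and your write-up is missing precisely that link.
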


Consider again the tensor $P$. As the submanifold is Lagrangian, we can write the tangent bundle of the ambient space as $T(\Sl\times\Sl)=TM\oplus JTM$, so there exist two endomorphisms $A,B:TM\rightarrow TM$ such that $P|_M=A+JB$. From Equation (\ref{proppe}) we deduce that these operators are symmetric with respect to $g$, commute with each other and $A^2+B^2=\id$. 
Then, the Gauss and Codazzi equations follow from (\ref{curv}) as:
\begin{equation}
    \begin{split}
        R( X,Y)Z&=-\tfrac{5}{6}\big(g(Y,Z) X-g( X,Z)Y\big)\\
            &\quad -\tfrac{2}{3}\big(g(AY,Z)A X-g(A X,Z)AY+g(BY,
            Z)B X\\
            &\quad-g(B X,Z)BY\big)-S_{h( X,Z)}Y+S_{h(Y,Z)} X,
            \label{Gauss}
    \end{split}
\end{equation}
\begin{equation}
\begin{split}
    (\overline{\nabla}_ X h)(Y,Z)-(\overline{\nabla}_Y h)( X,Z)&=-\tfrac{2}{3}\big(g(AY,Z)JB X-g(A X,Z)JBY\\
    &\quad-g(BY,Z)JA X+g(B X,Z)JAY\big).\label{Codazzi}
\end{split}
\end{equation}

A basis $\{e_1,e_2,e_3\}$ of $\R^3_1$ is said to be $\Delta_i$-orthonormal if the matrix of inner products is given by $\Delta_i$, where
\begin{equation*}
\Delta_1=\begin{pmatrix}
    -1 & 0 & 0 \\
    0 & 1 & 0 \\
    0  & 0 & 1 \\
\end{pmatrix},\ \ \ \
\Delta_2=\begin{pmatrix}
    0 & 1 & 0 \\
    1 & 0 & 0 \\
    0  & 0 & 1 \\
\end{pmatrix},\ \ \ \ 
\Delta_3=\begin{pmatrix}
    1 & 0 & 0 \\
    0 & -1 & 0 \\
    0  & 0 & 1 \\
\end{pmatrix} .
\end{equation*}

A positive oriented frame on a Lorentzian manifold $M$ is said to be a $\Delta_i$-orthonormal frame if it is a $\Delta_i$-orthonormal basis at each point.

Notice that symmetric operators no longer necessarily diagonalize in pseudo-Riemannian ambient spaces, so we have to resort to a result from \cite{Magid} which we will adapt to our particular case.
\begin{lemma}
Let $A$ and $B$ be two symmetric operators with respect to a Lorentzian metric on a three-dimensional vector space $V$. Assume that they commute and that $A^2 + B^2 = \id$. Then $A$ and $B$ must take one of the following forms, with respect to a $\Delta_i$-orthonormal basis.
\begin{table}[H] 
  \makebox[\textwidth]{
  \begin{tabular}{l l l l}
    \columntag{(1)}{case:8.1} \label{cc1} &
    $A = \begin{pmatrix}
    \cos 2\theta_1 & 0 & 0 \\
    0 & \cos 2\theta_2 & 0 \\
    0 & 0 & \cos 2\theta_3
    \end{pmatrix}$, & \hspace*{0.5 cm}&
    $B = \begin{pmatrix}
     \sin 2\theta_1 & 0 & 0 \\
    0 & \sin 2\theta_2 & 0 \\
    0 & 0 & \sin 2\theta_3
    \end{pmatrix}$, \\[4ex]
     \multicolumn{4}{l}{with $\Delta_i=\Delta_1$
    and $\theta_1,\theta_2,\theta_3\in[0,\pi)$. \hspace*{9 cm}}
\end{tabular}
}
\end{table}

    \begin{table}[H] 
        \centering
        \makebox[\textwidth]{
        \begin{tabular}{l l l}
     \columntag{(2)}{case:8.1b} \label{cc1b}&
    $A = \begin{pmatrix}
    \cosh \lambda & 0 & 0 \\
    0 & \cosh \lambda & 0 \\
    0 & 0 & \cos 2\theta
    \end{pmatrix}$, &
    $B = \begin{pmatrix}
   0& \sinh \lambda  & 0 \\
    -\sinh \lambda & 0&0 \\
    0 & 0 & \sin 2\theta
    \end{pmatrix}$, \\[4ex]
     \multicolumn{3}{l}{with $\Delta_i=\Delta_1$,  $\lambda\in\R$
    and $\theta\in[0,\pi)$.}
    \\[2ex]
      \columntag{(3)}{case:8.1c} \label{cc1c}&
    $A = \begin{pmatrix}
    \varepsilon_1 & 0 & 0 \\
    0 & \varepsilon_1 & 0 \\
    0 & 0 & \varepsilon_1
    \end{pmatrix}$, &
    $B = \begin{pmatrix}
   0& \varepsilon_2  & 0 \\
   0 & 0&0 \\
    0 & 0 & 0
    \end{pmatrix}$, \\[4ex]
     \multicolumn{3}{l}{with $\Delta_i=\Delta_2$ and $\varepsilon_1,\varepsilon_2\in \{-1,1\}$.}
    \\[2ex]
    \columntag{(4)}{case:8.2} \label{cc2} &
    $A = \begin{pmatrix}
    \cos 2\theta_1 & \varepsilon & 0 \\
    0 & \cos 2\theta_1 & 0 \\
    0 & 0 & \cos 2\theta_2
    \end{pmatrix}$, &
    $B = \begin{pmatrix}
    \sin 2\theta_1 & \frac{-(c^2 + 2 \varepsilon\cos 2\theta_1)}{2 \sin 2\theta_1} & c \\
    0 & \sin 2\theta_1 & 0 \\
    0 & c & \sin 2\theta_2
    \end{pmatrix}$, 
    \\[4ex]
    \multicolumn{3}{l}{  with $\Delta_i=\Delta_2$, 
    $\theta_1,\theta_2 \in [0,\pi)$, $\theta_1 \ne 0,\pi/2$, $\varepsilon=\pm1$ and $c \in \mathbb R$.
    If $c \ne 0$, then $\cos 2\theta_1 = \cos 2\theta_2$}\\[1ex] \multicolumn{3}{l}{and $\sin 2\theta_1 = -\sin 2\theta_2$.}
    \\[2ex]
    \columntag{(5)}{case:8.3} \label{cc3} &
    $A = \begin{pmatrix}
    -\varepsilon & \varepsilon & 0 \\ 0 & -\varepsilon & 0 \\ 0 & 0 & -\varepsilon
    \end{pmatrix}$, &
    $B = \begin{pmatrix}
    0 & t & \sqrt{2} \\
    0 & 0 & 0 \\
    0 & \sqrt{2} & 0
    \end{pmatrix}$,
    \\[4ex]
    \multicolumn{3}{l}{ with $\Delta_i=\Delta_2$, $\varepsilon=\pm1$ and $t \in \R$.}
    \\[2ex]
    \columntag{(6)}{case:8.4}  \label{cc4}&
    $A = \begin{pmatrix}
    \cos 2\theta & 0 & 1\\
    0 & \cos 2\theta & 0 \\
    0 & 1 & \cos 2\theta
    \end{pmatrix}$, &
    $B = \begin{pmatrix}
    \sin 2\theta & -(\csc 2\theta)^3/2 & -\cot 2\theta \\
    0 & \sin 2\theta & 0 \\
    0 & -\cot 2\theta & \sin 2\theta
    \end{pmatrix}$,
    \\[4ex]
    \multicolumn{3}{l}{with $\Delta_i=\Delta_2$, and $\theta \ne 0,\pi/2$.}
    \\[2ex]
    \columntag{(7)}{case:8.5} \label{cc5} &
    $A = \begin{pmatrix}
    s \cos 2\theta_1 & x & 0 \\
    -x & s \cos 2\theta_1 & 0 \\
    0 & 0 & \cos 2\theta_2
    \end{pmatrix}$, &
    $B = \begin{pmatrix}
    s \sin 2\theta_1 & y & 0 \\
    -y & s \sin 2\theta_1 & 0 \\
    0 & 0 & \sin 2\theta_2
    \end{pmatrix}$,
    \\[4ex]
    \multicolumn{3}{l}{ with $\Delta_i=\Delta_3$, $s=\sqrt{1+x^2+y^2}$, $x \ne 0$, $y\sin 2\theta_1 = -x \cos 2\theta_1$ and $\theta_1,\theta_2 \in [0,\pi)$.}
  \end{tabular}
  }
\end{table}

\label{propABcruda}
\end{lemma}

\begin{proof}
In \cite{Magid} it is shown that in a real $3$-dimensional vector space $V$ equipped with a Lorentzian metric, two symmetric linear transformations $A$ and $B$ that commute, can be put into one of the following forms with respect to $\Delta_i$-orthonormal bases:

\begin{align}
    A &=\begin{pmatrix}
            \lambda_1 & 0 & 0 \\
            0 & \lambda_2 & 0 \\
            0 & 0 & \lambda_3 \\
        \end{pmatrix},  && 
    B =\begin{pmatrix}
        \mu_1 & 0 & 0 \\
        0 & \mu_2 & 0 \\
        0 & 0 & \mu_3 \\
        \end{pmatrix},  && 
    \text{with $\Delta_{i}=\Delta_1$}, &&\label{case1} \\
        A &=\begin{pmatrix}
            \lambda_1 & 0 & 0 \\
            0 & \lambda_1 & 0 \\
            0 & 0 & \lambda_2 \\
        \end{pmatrix},  && 
    B =\begin{pmatrix}
        \mu_1 & \mu_2 & 0 \\
        -\mu_2 & \mu_1& 0 \\
        0 & 0 & \mu_3 \\
        \end{pmatrix},  && 
    \text{with $\Delta_{i}=\Delta_1$}, && \mu_2\neq0,\label{case1to4} \\
   A &=\begin{pmatrix}
            \lambda & 0 & 0 \\
            0 & \lambda & 0 \\
            0 & 0 & \lambda \\
        \end{pmatrix}, && 
    B =\begin{pmatrix}
        \mu_1 & \varepsilon & 0 \\
        0 & \mu_1 & 0 \\
        0 & 0 & \mu_2 \\
        \end{pmatrix},  && 
    \text{with $\Delta_{i}=\Delta_2$}, &&\label{case1to2} 
    \end{align}
    \begin{align}
   A &=\begin{pmatrix}
            \lambda & 0 & 0 \\
            0 & \lambda & 0 \\
            0 & 0 & \lambda \\
        \end{pmatrix}, && 
    B =\begin{pmatrix}
        \mu & 0 & 1 \\
        0 & \mu & 0 \\
        0 & 1 & \mu \\
        \end{pmatrix},  && 
    \text{with $\Delta_{i}=\Delta_2$}, &&\label{case1.3b} \\
    A &=\begin{pmatrix}
            \lambda_1 & \varepsilon & 0 \\
            0 & \lambda_1 & 0 \\
            0 & 0 & \lambda_2 \\
        \end{pmatrix}, && 
    B =\begin{pmatrix}
        \mu_1 & b & c \\
        0 & \mu_1 & 0 \\
        0 & c & \mu_2 \\
        \end{pmatrix}, &&
    \text{with $\Delta_{i}=\Delta_2$}, && c\lambda_1=c\lambda_2, \label{case2,3} \\
    A &=\begin{pmatrix}
            \lambda & 0 & 1 \\
            0 & \lambda & 0 \\
            0 & 1 & \lambda \\
        \end{pmatrix}, &&
    B =\begin{pmatrix}
        \mu & b & c \\
        0 & \mu & 0 \\
        0 & c & \mu \\
        \end{pmatrix}, && \text{with $\Delta_{i}=\Delta_2$}, &&\label{case4} \\
               A&=\begin{pmatrix}
            \alpha & \beta & 0 \\
            -\beta & \alpha & 0 \\
            0 & 0 & \lambda \\
        \end{pmatrix}, &&
        B=\begin{pmatrix}
        \gamma & \delta & 0 \\
        -\delta & \gamma & 0 \\
        0 & 0 & \mu \\
        \end{pmatrix}, && \text{with $\Delta_{i}=\Delta_3$},  && \beta\neq 0,  \label{case5} 
\end{align}
with $\varepsilon=\pm1$.
We will analyse the equation $A^2+B^2=\id$ on each type of matrix separately.

\textit{Type 1 \emph{(\ref{case1})}}:
 Computing $A^2+B^2=\id$ in (\ref{case1}) immediately yields Case \ref{case:8.1} of Lemma \ref{propABcruda}.

\textit{Type 2 \emph{(\ref{case1to4})}}:
It follows from $A^2+B^2=\id$ that $\mu_1=0$, $\lambda_1^2-\mu_2^2=1$ and $\lambda^2_2+\mu_3^2=1$. Hence, we get Case \ref{case:8.1b} of Lemma \ref{propABcruda}.

\textit{Type 3 \emph{(\ref{case1to2})}}:
Case \ref{case:8.1c} is immediate from computing $A^2+B^2=\id$,  as we obtain that $\mu_1=0$. It follows that $\lambda=\pm 1$ and that $\mu_2=0$. 
This is Case \ref{case:8.1c} of Lemma \ref{propABcruda}.

\textit{Type 4 \emph{(\ref{case1.3b})}}:
We easily see that under no conditions $A^2+B^2$ can be equal to the identity in this case.

\textit{Type 5 \emph{(\ref{case2,3})}}:
Computing $A^2+B^2=\id$ in (\ref{case2,3}) yields the equations
\begin{align}
      \lambda_1^2+\mu_1^2&=1,\label{1eq}\\ 
        \lambda_2^2+\mu_2^2&=1, \label{2eq}\\
        2\varepsilon\lambda_1+2b\mu_1+c^2&=0, \label{3eq}\\ 
        c(\mu_1+\mu_2)&=0.\label{4eq}
\end{align}

Suppose $c\neq 0$. Then because of Equation (\ref{4eq}) and $c\lambda_1=c\lambda_2$ we have that $\lambda_1=\lambda_2$ and $\mu_1=-\mu_2$. 
If $\mu_1=0$ then (\ref{1eq}) and (\ref{3eq}) imply that $\lambda_1=\lambda_2=-\varepsilon$ and $c=\sqrt{2}$, which is Case \ref{case:8.3} of Lemma \ref{propABcruda}. 
If instead $\mu_1\neq 0$, by Equation (\ref{1eq}) we can write $\lambda_1=\cos2\theta_1$ and $\mu_1=\sin2\theta_1$, then by (\ref{3eq}) we have $b=-(c^2+2\varepsilon\cos2\theta_1)/(2\sin2\theta_2)$, which leave us with Case \ref{case:8.2} of Lemma~\ref{propABcruda}. 

Now suppose that $c=0$. If $\mu_1=0$ then we get a contradiction from equations (\ref{1eq}) and (\ref{3eq}). 
Therefore $\mu_1$ must be different from zero, and from (\ref{1eq}), (\ref{2eq}) and (\ref{3eq}) we get that $\lambda_1=\cos2\theta_1$, $\lambda_2=\cos2\theta_2$, $\mu_1=\sin2\theta_1$, $\mu_2=\sin2\theta_2$ and $b=-\varepsilon\cot2\theta_1$, which is again Case \ref{case:8.2} of Lemma \ref{propABcruda}.

\textit{Type 6 \emph{(\ref{case4})}}: Computing $A^2+B^2=\id$ yields
\begin{align}
    \lambda^2+\mu^2=1,\label{eqqq1}\\
    \lambda+c\mu=0, \label{eqqq2} \\
     2 b \mu +c^2+1=0. \label{eqqq3} 
\end{align}
From (\ref{eqqq1}) and (\ref{eqqq2}) we easily see that $\mu\neq 0$, thus $c=-\lambda/\mu$. By replacing this in (\ref{eqqq3}) we get that $b=-1/(2\mu^3)$. Finally by (\ref{eqqq1}) we have that $\lambda=\cos2\theta$, $\mu=\sin2\theta$, $c=-\cot2\theta$ and $b=-\csc(2\theta)^3/2$, which is Case \ref{case:8.4} in Lemma \ref{propABcruda}.

\textit{Type 7 \emph{(\ref{case5})}}: Again, we compute $A^2+B^2=\id$ in (\ref{case5}), from which follows
\begin{align}
    \alpha^2-\beta^2+\gamma^2-\delta^2=1, \label{c51} \\
    \alpha\beta+\gamma\delta=0, \label{c52}\\
    \lambda^2+\gamma^2=1. \label{c53}
\end{align}
We can transform (\ref{c51}) into
\begin{equation*}
    \begin{split}
     \left(\frac{\alpha}{\sqrt{1+\beta^2+\delta^2}}\right)^2+\left(\frac{\gamma}{\sqrt{1+\beta^2+\delta^2}}\right)^2=1,\\
    \end{split}
\end{equation*}
thus we obtain that $\alpha=\sqrt{1+\beta^2+\delta^2}\cos\theta$ and $ \gamma=\sqrt{1+\beta^2+\delta^2}\sin\theta$ for some $\theta$. 
Equation~(\ref{c52}) becomes $\sqrt{1+\beta^2+\delta^2}(\beta\cos\theta+\delta\sin\theta)=0$, hence we can conclude that $\delta=-\beta\cot\theta$ since if $\sin\theta=0$ then $\beta=0$, which contradicts the last condition of (\ref{case5}).
 Renaming $x=\beta$, $y=\delta$ and $s=\sqrt{1+x^2+y^2}$ we obtain Case \ref{case:8.5} of Lemma \ref{propABcruda}.
\end{proof}

\begin{remark}
    In \cite{Magid}, only the case $\varepsilon=1$ in equations \eqref{case1to2} and \eqref{case2,3}. However, the case $\varepsilon=-1$ is essentially different, as there is no change of basis preserving the metric that can take one case into the other.
\end{remark}

Now, fixing a point $p\in M$ we know that there exists a basis of $T_pM$ as in Lemma \ref{propABcruda}. Then we can extend locally this basis to a $\Delta_i$-orthonormal frame $\{E_1,E_2,E_3\}$ on $M$.

From the first equation of (\ref{lagrprop}) it follows that $G(X,Y)$ is a normal vector to $M$ for any tangent vectors $X,Y$ on $M$. Let $\{E_1,E_2,E_3\}$ be a $\Delta_i$-orthonormal frame of the tangent space of $M$.
By (\ref{gnormal}) and (\ref{constanttype}) we get $JG(E_j,E_k)=\varepsilon \sqrt{\tfrac{2}{3}}E_l$ with $\varepsilon=\pm1$ depending on $\Delta_i$, as the following table shows.  

\begin{equation}
    \begin{tblr}{|c|c|c|c|}
\hline
      &   \Delta_1 &   \Delta_2  & \Delta_3  \\   
     \hline
     JG(E_1,E_2)  &  \sqrt{\frac{2}{3}}E_3  &  \sqrt{\frac{2}{3}}E_3 & \sqrt{\frac{2}{3}}E_3 \\
     JG(E_1,E_3)  &  -\sqrt{\frac{2}{3}}E_2  &  -\sqrt{\frac{2}{3}}E_1  & \sqrt{\frac{2}{3}}E_2 \\
     JG(E_2,E_3)  &  -\sqrt{\frac{2}{3}}E_1  &  \sqrt{\frac{2}{3}}E_2   & \sqrt{\frac{2}{3}}E_1 \\
     \hline
\end{tblr}\label{tabla}
\end{equation}

With this information we can state a stronger version of Lemma $\ref{propABcruda}$.
 \begin{lemma}
 Let $M$ be a Lagrangian submanifold of $\Sl\times\Sl$ and $P$ the almost product structure given in $(\ref{prodstructuredef})$. 
 Then there exists a Lagrangian submanifold $N$ congruent to $M$ such that the restriction of $P$ to $N$ can be written as $P|_N=A+JB$, where $A,B:TN\to TN$ must have one of the following forms, with respect to a $\Delta_i$-orthonormal frame $\{E_1,E_2,E_3\}$.
\begin{table}[H]
  \centering
  \makebox[\textwidth]{
  \begin{tabular}{l l l l}
    \columntag{(1)}{case:10.1} &
    $A = \begin{pmatrix}
    \cos 2\theta_1 & 0 & 0 \\
    0 & \cos 2\theta_2 & 0 \\
    0 & 0 & \cos 2\theta_3
    \end{pmatrix}$, & \hspace*{2 cm}&
    $B = \begin{pmatrix}
    \sin 2\theta_1 & 0 & 0 \\
    0 & \sin 2\theta_2 & 0 \\
    0 & 0 & \sin 2\theta_3
    \end{pmatrix}$, 
    \\[4ex]
    \multicolumn{4}{l}{
    with $\Delta_i=\Delta_1$ and $\theta_1 + \theta_2 + \theta_3 = 0$ modulo $\pi$.}
\end{tabular}}
\end{table}    
    \begin{table}[H]
        \centering
        \makebox[\textwidth]{
        \begin{tabular}{l l l}
     \columntag{(2)}{case:10.2} &
    $A = \begin{pmatrix}
    \cos 2\theta_1 & 1 & 0 \\
    0 & \cos 2\theta_1 & 0 \\
    0 & 0 & \cos 2\theta_2
    \end{pmatrix}$, &
    $B = \begin{pmatrix}
    \sin 2\theta_1 & -\cot 2\theta_1 & 0 \\
    0 & \sin 2\theta_1 & 0 \\
    0 & 0 & \sin 2\theta_2
    \end{pmatrix}$, 
    \\[4ex]
    \multicolumn{3}{l}{
     with $\Delta_i=\Delta_2$, $2\theta_1 + \theta_2 = 0$ modulo $\pi$ and $\theta_1 \ne 0, \pi/2$.}
    \\[2ex]
    \columntag{(3)}{case:10.3} &
    $A = \begin{pmatrix}
    -\frac12 & 0 & 1 \\ 0 & -\frac12 & 0 \\ 0 & 1 & -\frac12
    \end{pmatrix}$, &
    $B = \pm \begin{pmatrix}
    \frac{\sqrt 3}{2} & \frac{-4}{3\sqrt 3} & \frac{1}{\sqrt 3} \\
    0 & \frac{\sqrt 3}{2} & 0 \\
    0 & \frac{1}{\sqrt 3} & \frac{\sqrt 3}{2}
    \end{pmatrix}$,
    \\[4 ex]
    \multicolumn{3}{l}{with $\Delta_i=\Delta_2$.}
    \\[2ex]
    \columntag{(4)}{case:10.4} &
    $A = \begin{pmatrix}
    \cosh\lambda \cos 2\theta_1 & \sinh\lambda\sin\theta_2 & 0 \\
    -\sinh\lambda\sin\theta_2 & \cosh\lambda \cos 2\theta_1 & 0 \\
    0 & 0 & \cos 2\theta_2
    \end{pmatrix}$, & \\[4ex]
    & $B = \begin{pmatrix}
    \cosh\lambda \sin 2\theta_1 & \sinh\lambda\cos\theta_2 & 0 \\
    -\sinh\lambda\cos\theta_2 & \cosh\lambda \sin 2\theta_1 & 0 \\
    0 & 0 & \sin 2\theta_2
    \end{pmatrix}$,&
    \\[4ex]
    \multicolumn{3}{l}{ with $\Delta_i=\Delta_3$, $2\theta_1 + \theta_2 = 0$ modulo $\pi$, $\theta_2 \ne 0,\pi$ and $\lambda \ne 0$.}
  \end{tabular}
  }
\end{table}
\label{propAB}
 \end{lemma}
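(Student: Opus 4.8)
The extra input beyond Lemma \ref{propABcruda} is the single tensorial identity \eqref{pyg}, namely $PG(X,Y)+G(PX,PY)=0$, which links the operators $A,B$ to the tensor $G$ through the decomposition $P|_M=A+JB$. The plan is therefore to take each of the five algebraic forms of $(A,B)$ furnished by Lemma \ref{propABcruda} and to impose \eqref{pyg}, evaluated on the frame $\{E_1,E_2,E_3\}$, reading off $G$ from table \eqref{tabla}. Since \eqref{pyg} is a genuinely geometric condition that was unavailable at the purely algebraic stage, it will cut down the parameters and, in two instances, discard a form altogether.

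To set up the computation I would first record the consequences $G(JX,Y)=G(X,JY)=-JG(X,Y)$ and $G(JX,JY)=-G(X,Y)$ of the nearly Kähler relations \eqref{nkprop}. Writing $G_{jk}:=JG(E_j,E_k)$ for the tangent vector prescribed by \eqref{tabla}, so that $G(E_j,E_k)=-JG_{jk}$, and using $PJ=-JP$ from \eqref{proppe}, the two terms of \eqref{pyg} expand in the frame as
\begin{equation*}
PG(E_j,E_k)=JAG_{jk}-BG_{jk},
\end{equation*}
\begin{equation*}
G(PE_j,PE_k)=G(AE_j,AE_k)-G(BE_j,BE_k)-J\bigl(G(AE_j,BE_k)+G(BE_j,AE_k)\bigr).
\end{equation*}
Substituting $AE_j=\sum_iA_{ij}E_i$, $BE_j=\sum_iB_{ij}E_i$ and replacing every $G(E_p,E_q)$ by $-JG_{pq}$ turns both sides into explicit combinations of the $E_m$ and $JE_m$; comparing the coefficients of $E_m$ and of $JE_m$ then produces a small linear and trigonometric system in the parameters of each form.

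Carrying this out case by case is the core of the proof. For the diagonal form \eqref{case1} the system collapses to $\cos2\theta_3=\cos(2\theta_1+2\theta_2)$ and $\sin(2\theta_1+2\theta_2)=-\sin2\theta_3$ together with their cyclic analogues, which is precisely $\theta_1+\theta_2+\theta_3\equiv0\pmod\pi$, giving refined Case 1. For the Jordan-type forms \eqref{case2,3}, the coefficients of the terms carrying the off-diagonal entry force $\lambda_1c=\mu_1c=0$; since $(\lambda_1,\mu_1)\neq(0,0)$ this yields $c=0$, which eliminates crude Case (3) (where $c=\sqrt2$) outright and reduces crude Case (2) to the form with $b=-\cot2\theta_1$ and the additional relation $2\theta_1+\theta_2\equiv0\pmod\pi$, that is, refined Case 2. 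For the form \eqref{case4} the same coefficient comparison forces $\cos2\theta=-\tfrac12$, hence $\theta\equiv\pi/3\pmod\pi$, and substituting this value produces the numerical matrices of refined Case 3, the sign $\pm$ recording the two admissible values of $\sin2\theta$. Finally, for the $\Delta_3$ form \eqref{case5}, the comparison gives $\cos2\theta_2=\cos4\theta_1$ and $\sin2\theta_2=-\sin4\theta_1$, hence $2\theta_1+\theta_2\equiv0\pmod\pi$; combined with the relation $y\sin2\theta_1=-x\cos2\theta_1$ already present in Lemma \ref{propABcruda}, this makes the polar substitution $x=\sinh\lambda\sin\theta_2$, $y=\sinh\lambda\cos\theta_2$ (so that $s=\sqrt{1+x^2+y^2}=\cosh\lambda$) legitimate and delivers refined Case 4 with $\lambda\neq0$ and $\theta_2\neq0,\pi$.

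I expect the main obstacle to be the bookkeeping in the $\Delta_2$ and $\Delta_3$ cases. There the frame is only pseudo-orthonormal, so $A$ and $B$ are symmetric with respect to $\Delta_i$ rather than as matrices, and one must track the indefinite signs carefully both when expressing $AE_j,BE_j$ in the frame and when selecting the correct entry of table \eqref{tabla} for each $G(E_p,E_q)$. The Jordan and rotation-block structures make the expansions of $G(AE_j,AE_k)$ and $G(AE_j,BE_k)$ noticeably longer than in the diagonal case, and the whole argument hinges on extracting cleanly the vanishing coefficient that forces $c=0$ (respectively the equation forcing $\cos2\theta=-\tfrac12$), since these are exactly what discard the spurious forms and leave the four stated possibilities.
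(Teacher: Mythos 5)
Your proposal is correct and follows essentially the same route as the paper: the paper's key relation $PE_i=\alpha JG(PE_j,PE_k)$ (its Equation \eqref{pei}) is exactly the identity \eqref{pyg} contracted with table \eqref{tabla}, and your case-by-case conclusions (the angle sum in Case 1, $c=0$ and $2\theta_1+\theta_2\equiv 0$ in Case 2, $\cos 2\theta=-\tfrac12$ in Case 4, and the polar substitution in Case 5) all agree with the paper's. The only cosmetic differences are that the paper kills crude Case (3) by the direct sign contradiction $PE_1=-E_1$ versus $+E_1$ rather than by forcing $c=0$, and obtains the parametrization $x=\sinh\lambda\sin\theta_2$, $y=\sinh\lambda\cos\theta_2$ from a reflection--eigenvector argument rather than from the relation $y\sin 2\theta_1=-x\cos 2\theta_1$ combined with the angle condition.
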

We will call $\theta_i$, $\theta$ and $\lambda$ angle functions. 
Lagrangian submanifolds such that $P$ takes the form of Case \ref{case:10.1} of Lemma \ref{propAB} are said to be of diagonalizable type.

 \begin{proof}
The next equation follows from applying $P$ to (\ref{tabla}), using (\ref{pyg}) and the fact that $PJ=-JP$:
\begin{equation}
PE_i=-\alpha JPG(E_j,E_k)=\alpha JG(PE_j,PE_k), \label{pei} 
\end{equation}
where $\alpha$ can be either $\sqrt{\tfrac{3}{2}}$ or $-\sqrt{\tfrac{3}{2}}$, depending on which case of (\ref{tabla}) we are in. For Case \ref{case:8.1}, Case \ref{case:8.1b} and Case \ref{case:8.5} of Lemma \ref{propABcruda} the triple $(i,j,k)$ is a permutation of $(1,2,3)$. For cases \ref{case:8.1c} to \ref{case:8.4} of Lemma \ref{propABcruda} this is not true anymore, and $i$ is equal either to $j$ or to $k$.
We consider the seven cases of Lemma \ref{propABcruda} applied to Equation (\ref{pei}), one by one.

\textit{Case \emph{\ref{case:8.1}}.} Using a similar procedure as in \cite{Dioos}, we get $\theta_1+\theta_2+\theta_3=0$ modulo $\pi$.
 
\textit{Case \emph{\ref{case:8.1b}}.} Taking $i=3$, $j=1$ and $k=2$ in \eqref{pei} and looking at the component of $JE_3$ we conclude that $\theta=0$.
Now we apply to $M$ the isometry $\Psi_{4\pi/3,1}$ given in $\eqref{isoslsl}$. The tensor $P$ restricted to $\Psi_{4\pi/3,1}(M)$ can be written as $\tilde{A}+J\tilde{B}$, where 
\[
\tilde{A}=-\frac{1}{2}A+\frac{\sqrt{3}}{2}B, \ \ \ \ \tilde{B}=\frac{\sqrt{3}}{2}A+\frac{1}{2}B.
\]
This is Case \ref{case:10.4} of Lemma \ref{propAB} with $\theta_1=\theta_2=\tfrac{\pi}{3}$.

\textit{Case \emph{\ref{case:8.1c}}.}
With $i=1$, $j=1$ and $k=3$ in \eqref{pei} we obtain $\varepsilon_1=1$.
As there is no possible change of basis that can transform $\varepsilon_2$ into $1$, we apply either $\Psi_{2\pi/3,1}$ or $\Psi_{4\pi/3,1}$ to $M$, depending on its value.
 Proceeding in the same way as in the previous case, we restrict $P$ to the image of the isometry. 
 After a change of basis, we get Case \ref{case:10.2} of Lemma \ref{propAB}, with $\theta_1=\theta_2=\tfrac{\pi}{3}$ if $\varepsilon_2=1$ and $\theta_1=\theta_2=\tfrac{2\pi}{3}$ if $\varepsilon_2=1$.


 \textit{Case \emph{\ref{case:8.2}}}.
Taking $i=1$, $j=1$ and $k=1$ in (\ref{pei}) yields
\[
\cos2\theta_1=\cos2(\theta_1+\theta_2),\ \ \ \ \ \sin2\theta_1=-\sin2(\theta_1+\theta_2).
\]
Thus,
\begin{equation*}
\begin{split}
\cos2(2\theta_1+\theta_2)&=\cos2\theta_1\cos2(\theta_1+\theta_2)-\sin2\theta_1\sin2(\theta_1+\theta_2)\\
&=\cos^22(\theta_1+\theta_2)+\sin^22(\theta_1+\theta_2)\\
&=1.
\end{split}    
\end{equation*}
Then $2\theta_1+\theta_2=0$ mod $\pi$. 
Looking at the component of $E_3$ in (\ref{pei}) when taking $i=2$, $j=2$ and $k=3$ we obtain $c\sin2\theta_1=0$, therefore $c=0$ as $\sin2\theta_1\neq 0$.

If $\varepsilon=1$, we have Case \ref{case:10.2} of Lemma~\ref{propAB}.

If $\varepsilon=-1$, we would like to transform it into $1$. We apply either $\Psi_{2\pi/3,0}$ or $\Psi_{2\pi/3,1}$ to $M$ and restrict $P$ to the image. The component $\tilde{A}_{12}$ is $1+\sqrt{3} \cot 2 \theta_1$ or $1-\sqrt{3} \cot 2 \theta_1$. 
These two values cannot be negative at the same time, thus we can always choose one to be positive. After a change of basis, we get Case \ref{case:10.2} of Lemma \ref{propAB}.

\textit{Case \emph{\ref{case:8.3}}}.
On the left hand side of (\ref{pei}) we have $PE_1=-E_1$ and on the right hand side we have
\begin{equation*}
\begin{split}
    -\sqrt{\frac{3}{2}}JG(PE_1,PE_3)=-\sqrt{\frac{3}{2}}JG(-E_1,-E_3)=E_1,
\end{split}    
\end{equation*}
which is a contradiction. Therefore, there is no Lagrangian submanifold with such a frame.

\textit{Case \emph{\ref{case:8.4}}.}
Taking the component of $E_3$ on both sides of (\ref{pei})
with $i=2$, $j=2$ and  $k=3$ yields $-2\cos2\theta=1$.
Hence $\theta=\tfrac{\pi}{3}$ or $\theta=\tfrac{2}{3}\pi$. That is
\[
    A=\begin{pmatrix}
    -\frac{1}{2} & 0 & 1 \\
    0 & -\frac{1}{2} & 0 \\
    0  & 1 & -\frac{1}{2} \\
\end{pmatrix}, \ \ 
B=\pm\begin{pmatrix}
    \frac{\sqrt{3}}{2} & -\frac{4}{3\sqrt{3}} & \frac{1}{\sqrt{3}} \\
    0 & \frac{\sqrt{3}}{2} & 0 \\
    0  & \frac{1}{\sqrt{3}} & \frac{\sqrt{3}}{2} \\
\end{pmatrix}.
\]

\textit{Case \emph{\ref{case:8.5}}}.
Taking $i=1$, $j=2$ and $k=3$ in (\ref{pei}) gives us
\begin{align*}
    \cos2\theta_1&=\cos2(\theta_1+\theta_2),\\
       \sin2\theta_1&=-\sin2(\theta_1+\theta_2).
\end{align*}
Therefore as in the previous cases we have $\cos2(2\theta_1+\theta_2)=1$. We take $i=2$, $j=1$ and $k=3$ in (\ref{pei}) and we look at the components of $E_1,JE_1$:
\begin{equation}
    \begin{split}
        x&=-x\cos2\theta_2+y\sin2\theta_2,\\ y&=x\sin2\theta_2+y\cos2\theta_2.\\
    \end{split}\label{reflection}
\end{equation}
Note that the equations in (\ref{reflection}) can be written as the equation $Rv=v$ where $v=(x,y)^t$ and
\begin{equation*}
    R=\begin{pmatrix}
    \cos(\pi-2\theta_2) &  \sin(\pi-2\theta_2)\\
    \sin(\pi-2\theta_2) & -\cos(\pi-2\theta_2)\\
    \end{pmatrix}.
    \end{equation*}
Hence, $v$ is an eigenvector of the matrix $R$ associated to the eigenvalue $1$.
Now, $R$ is a reflection in the plane with respect to the straight line  $s\mapsto s(\cos(\frac{\pi}{2}-\theta_2),\sin(\frac{\pi}{2}-\theta_2))^t=s(\sin\theta_2,\cos\theta_2)^t$, therefore $v$ lies in that subspace. 
So $x=\sinh\lambda\sin\theta_2$, $y=\sinh\lambda\cos\theta_2$ for some $\lambda\in\R$ different from zero. 
Finally, replacing $x$ and $y$ in the matrices of Case \ref{cc5} of Lemma \ref{propABcruda} yields what we desired.
\end{proof}

\section{Totally geodesic Lagrangian submanifolds}\label{totallygeodesic}
Let $M$ be a Lagrangian submanifold of $\Sl\times\Sl$ and let $\{E_1,E_2,E_3\}$ be a frame given by Lemma \ref{propAB}.
As the normal space is spanned by $\{JE_1,JE_2,JE_3\}$, we may define functions $\omega_{ij}^k$ and $h_{ij}^k$ by $\nabla_{E_i}E_j=\sum_k\omega_{ij}^kE_k$ and $h(E_i,E_j)=\sum_kh_{ij}^kJE_k$.
From the second equation of (\ref{lagrprop}) and the compatibility of the connection with the metric, we obtain the following symmetries.

First, for frames $\{E_i\}$ associated to $\Delta_1$ and $\Delta_3$ we have
\[
\delta_k\omega_{ij}^k=-\delta_j\omega_{ik}^j, \ \ \ \ \ \ \ h_{ij}^k=h_{ji}^k=\delta_j\delta_kh_{ik}^j,
\]
where 
\[\delta_i=g( E_i,E_i).\]
This implies that $\omega_{ij}^j=0$ for all $i,j=1,2,3$.

If the frame is associated to $\Delta_2$ we get
 \[\omega_{ij}^k=-\omega_{i\widehat{k}}^{\widehat{j}},\ \ \ \ \ \ \ h_{ij}^k=h_{ji}^k=h_{i\widehat{k}}^{\widehat{j}}\] 
 where $\widehat{2}=1$, $\widehat{1}=2$ and $\widehat{3}=3$. As before, we have that $\omega_{i3}^3=0$. 
 Also,  if $j=1$, $k=2$ or $j=2$, $k=1$ then $\omega_{ij}^k=0$.
 
The almost product structure $P$ is not integrable, but we have a neat expression for its covariant derivative. Namely, using an pseudo-orthonormal frame we can check $(\Tilde{\nabla}_XP)JY=J(\Tilde{\nabla}_XP)Y$, from which it follows that 
\begin{equation}
(\Tilde{\nabla}_XP)Y=\frac{1}{2}(JG(X,PY)+JPG(X,Y)). \label{nablap}
\end{equation}

Equation (\ref{nablap}) is useful since it gives conditions on the components $h_{ij}^k$ and $\omega_{ij}^k$. As said equation depends on $P$, we are forced to divide between the four cases of Lemma \ref{propAB}.

\subsection{Lagrangian submanifolds of diagonalizable type}

Equation (\ref{nablap}) for Case \ref{case:10.1} of Lemma \ref{propAB} yields the following lemma.

\begin{lemma}\label{lemmacase1}Let $M$ be a Lagrangian submanifold of $\Sl\times\Sl$. Assume that the operators $A,B$ take the form of the first case in Lemma \emph{\ref{propAB}} with respect to a $\Delta_1$-orthonormal frame $\{E_1,E_2,E_3\}$. Except for $h_{12}^3$, all the components of the second fundamental form are given by the derivatives of the angle functions $\theta_1,\theta_2$ and $\theta_3$:
\begin{equation}
    E_i(\theta_j)=-\delta_i\delta_jh_{jj}^i,
    \label{anglederi}
\end{equation}
where $\delta_i=g(E_i,E_i)$. Also 
\begin{equation}
   h_{ij}^k\cos(\theta_j-\theta_k)=(\tfrac{1}{\sqrt{6}}\delta_k\varepsilon_{ij}^k-\omega_{ij}^k)\sin(\theta_j-\theta_k),\label{sffc}
\end{equation}
for $j\neq k$.
\end{lemma}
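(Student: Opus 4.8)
The plan is to evaluate the master identity \eqref{nablap} on frame fields, taking $X=E_i$ and $Y=E_j$, and to read off the tangential and normal parts with respect to the splitting $T(\Sl\times\Sl)|_M=TM\oplus JTM$. Since we are in the first case of Lemma~\ref{propAB}, the operators $A,B$ are diagonal, so $PE_j=AE_j+JBE_j=\cos 2\theta_j\,E_j+\sin 2\theta_j\,JE_j$, and likewise $P(JE_k)=-JPE_k=\sin 2\theta_k\,E_k-\cos 2\theta_k\,JE_k$ using $PJ=-JP$ from \eqref{proppe}. These formulas turn every occurrence of $P$ into an explicit combination of $E$'s and $JE$'s weighted by the angle functions, so that both sides of \eqref{nablap} can be written in the single frame $\{E_k,JE_k\}$.

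For the left-hand side I would write $(\tilde\nabla_{E_i}P)E_j=\tilde\nabla_{E_i}(PE_j)-P\tilde\nabla_{E_i}E_j$, expand $PE_j$ as above, and differentiate. The term $\tilde\nabla_{E_i}(JE_j)$ is handled by the defining relation $G(X,Y)=(\tilde\nabla_X J)Y$, giving $\tilde\nabla_{E_i}(JE_j)=G(E_i,E_j)+J\tilde\nabla_{E_i}E_j$, after which the Gauss formula \eqref{gaussformula} together with $\nabla_{E_i}E_j=\sum_k\omega_{ij}^kE_k$ and $h(E_i,E_j)=\sum_k h_{ij}^kJE_k$ makes everything explicit; the derivatives $E_i(\cos 2\theta_j)$, $E_i(\sin 2\theta_j)$ produce the terms carrying $E_i(\theta_j)$. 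For the right-hand side I use bilinearity of $G$, the nearly Kähler identity $G(X,JY)=-JG(X,Y)$ from \eqref{nkprop}, and the explicit values of $JG(E_i,E_j)$ recorded in \eqref{tabla}: writing $JG(E_i,E_j)=c_{ij}E_l$ with $c_{ij}=\pm\sqrt{2/3}$ and $l$ the third index gives $G(E_i,E_j)=-c_{ij}JE_l$, and hence a closed expression for $\tfrac12(JG(E_i,PE_j)+JPG(E_i,E_j))$ living only in the $E_l,JE_l$ directions.

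With both sides expressed in $\{E_k,JE_k\}$ I would then equate coefficients. Comparing the $E_j$-component (tangential) and the $JE_j$-component (normal) yields the two relations $\sin 2\theta_j\,(E_i(\theta_j)+\delta_i\delta_j h_{jj}^i)=0$ and $\cos 2\theta_j\,(E_i(\theta_j)+\delta_i\delta_j h_{jj}^i)=0$, where I use $\omega_{ij}^j=0$ and the symmetry $h_{ij}^j=\delta_i\delta_j h_{jj}^i$ coming from \eqref{lagrprop}; since $\sin 2\theta_j$ and $\cos 2\theta_j$ cannot vanish simultaneously, this forces \eqref{anglederi}. For \eqref{sffc} I would compare the $E_k$- and $JE_k$-components with $k\neq j$, combine them, and apply the sum-to-product identities $\cos 2\theta_j+\cos 2\theta_k=2\cos(\theta_j+\theta_k)\cos(\theta_j-\theta_k)$ and $\sin 2\theta_j-\sin 2\theta_k=2\cos(\theta_j+\theta_k)\sin(\theta_j-\theta_k)$; cancelling the common factor $\cos(\theta_j+\theta_k)$ and identifying $\tfrac12 c_{ij}$ with $\tfrac{1}{\sqrt 6}\delta_k\varepsilon_{ij}^k$ (which one verifies case by case against \eqref{tabla}, with $\varepsilon$ the permutation symbol) produces exactly \eqref{sffc}. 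The component $h_{12}^3$, together with its symmetric relatives $h_{13}^2$ and $h_{23}^1$, never appears isolated in these equations, which is why it is the sole exception.

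The hard part will be the bookkeeping rather than any single deep idea: each of the three vector components of \eqref{nablap} carries a different trigonometric prefactor, and the clean statements \eqref{anglederi}--\eqref{sffc} emerge only after the correct pairing of a tangential component with its normal partner allows these prefactors to cancel. Care is also needed on the loci where a factor such as $\cos 2\theta_j$ or $\cos(\theta_j+\theta_k)$ vanishes; there one concludes the stated identities from the complementary component, or by a continuity argument on the dense set where the factors are nonzero.
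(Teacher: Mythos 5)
Your proposal is correct and follows essentially the same route as the paper: both evaluate \eqref{nablap} on pairs $E_i,E_j$, decompose into the $\{E_k,JE_k\}$ frame, and observe that each relation appears twice with prefactors $\sin$ and $\cos$ of the same angle (either $2\theta_j$ or $\theta_j+\theta_k$), which cannot vanish simultaneously, so the parenthesised factor must vanish. Your closing remark about concluding from the complementary component where one prefactor vanishes is exactly the paper's "sine and cosine never vanish at the same time" step.
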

\begin{proof}
Taking $X=E_1$ and $Y=E_2$ in (\ref{nablap}) and comparing the components in $E_1,E_2,E_3,JE_1,JE_2$ and $JE_3$ yields the following six equations
\begin{align*}
    (h_{11}^2 \cos(\theta_1 - \theta_2) +  \omega_{11}^2 \sin(\theta_1 - \theta_2)) \sin(\theta_1 + \theta_2)&=0,\\
    (h_{11}^2 \cos(\theta_1 - \theta_2) + \omega_{11}^2 \sin(\theta_1 - \theta_2))\cos(\theta_1 + \theta_2) &=0,\\
    (h_{22}^1 - E_1(\theta_2)) \sin(2 \theta_2)&=0,\\
    (h_{22}^1 - E_1(\theta_2))\cos(2 \theta_2) &=0,\\
    ( h_{12}^3 \cos(\theta_2 - \theta_3) + (-\tfrac{1}{\sqrt{6} }+ \omega_{12}^3) \sin(\theta_2 - \theta_3)) \sin(\theta_2 + \theta_3)&=0,\\
    ( h_{12}^3 \cos(\theta_2 - \theta_3) + (-\tfrac{1}{\sqrt{6}} + \omega_{12}^3) \sin(\theta_2 - \theta_3))\cos(\theta_2 + \theta_3) &=0.
\end{align*}
Since sine and cosine never vanish at the same time (\ref{anglederi}) and (\ref{sffc}) hold for $i=1$ and $j=2$. The other equations follow in a similar way, by choosing different $X$ and $Y$ in (\ref{nablap}).
\end{proof}

\begin{proposition}
Let $M$ be a Lagrangian submanifold of $\Sl\times\Sl$ of diagonalizable type. If the angle functions $\theta_i$ are constant and $h_{12}^3=0$, then $M$ is totally geodesic. Conversely, if the submanifold is totally geodesic then the angles are constant.    
\end{proposition}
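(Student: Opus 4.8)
The plan is to leverage the symmetries of $h$ to reduce every component of the second fundamental form to the diagonal components $h_{jj}^i$ together with the single component $h_{12}^3$, and then to read off both implications directly from the angle-derivative identity \eqref{anglederi}. First I would recall the symmetries valid for a $\Delta_1$-orthonormal frame, recorded at the start of Section \ref{totallygeodesic}, namely $h_{ij}^k = h_{ji}^k = \delta_j\delta_k h_{ik}^j$. These make the array $\delta_k h_{ij}^k$ totally symmetric in $i,j,k$. Consequently any component with a repeated index is, up to a sign, a diagonal component (for instance $h_{12}^2 = h_{21}^2 = \delta_1\delta_2 h_{22}^1$), while any component whose three indices are distinct is, up to a sign, equal to $h_{12}^3$ (for instance $h_{13}^2 = \delta_2\delta_3 h_{12}^3$ and $h_{23}^1 = \delta_1\delta_3 h_{12}^3$). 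Hence the whole second fundamental form is determined by the set $\{h_{jj}^i : i,j\in\{1,2,3\}\}$ together with $h_{12}^3$.

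The converse implication is then immediate. If $M$ is totally geodesic, then in particular $h_{jj}^i = 0$ for all $i,j$, so \eqref{anglederi}, which reads $E_i(\theta_j) = -\delta_i\delta_j h_{jj}^i$, gives $E_i(\theta_j)=0$ for all $i,j$; since $\delta_i\delta_j\neq 0$, each angle function $\theta_1,\theta_2,\theta_3$ is constant.

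For the forward implication I would assume the angles are constant and $h_{12}^3 = 0$. Constancy of the angles together with \eqref{anglederi} forces $h_{jj}^i = 0$ for every $i,j$. Combined with the hypothesis $h_{12}^3 = 0$, this annihilates every element of the determining set identified in the first step; propagating through the symmetries $h_{ij}^k = h_{ji}^k = \delta_j\delta_k h_{ik}^j$ then yields $h_{ij}^k = 0$ for all $i,j,k$. Therefore $h \equiv 0$, and $M$ is totally geodesic.

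I do not anticipate any genuine analytic obstacle; the real content is the bookkeeping observation that total symmetry of $h$ leaves exactly one component, $h_{12}^3$, that the angle derivatives \eqref{anglederi} cannot detect. This is precisely why $h_{12}^3 = 0$ must be imposed as a separate hypothesis in the forward direction, and it foreshadows the subsequent analysis, where constancy of the angle functions alone will not suffice to make the submanifold totally geodesic.
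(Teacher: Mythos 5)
Your proposal is correct and follows essentially the same route as the paper: both derive $h_{jj}^i=0$ from constancy of the angles via \eqref{anglederi} and then invoke the symmetries $h_{ij}^k=h_{ji}^k=\delta_j\delta_k h_{ik}^j$ to reduce all remaining components to the diagonal ones and $h_{12}^3$, with the converse being immediate from \eqref{anglederi}. Your write-up merely makes explicit the bookkeeping that the paper's one-line appeal to "the symmetries of $h_{ij}^k$" leaves implicit.
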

\begin{proof}
    By Lemma \ref{lemmacase1}, if the angle functions are constant then $h_{ii}^j=0$ for all $i,j=1,2,3$. Using the symmetries of $h_{ij}^k$ we conclude that the submanifold is totally geodesic. The converse is immediate by Lemma \ref{lemmacase1}.
\end{proof}

Now notice that
\begin{equation}
    \begin{split}
        -\delta_kE_k(h_{jj}^i)+\delta_{i}E_i(h_{jj}^k)&=\delta_k\delta_i\delta_jE_k(E_i(\theta_j))-\delta_{i}\delta_k\delta_jE_i(E_k(\theta_j)))\\
        &=\delta_k\delta_i\delta_j[E_k,E_i](\theta_j)\\
        &=\delta_k\delta_i\delta_j (\nabla_{E_k}E_i-\nabla_{E_i}E_k)(\theta_j)\\
        &=\delta_k\delta_i\delta_j\sum_l(\omega_{ki}^l-\omega_{ik}^l)E_l(\theta_j)\\
        &=\delta_k\delta_i\sum_l\delta_l(\omega_{ik}^l-\omega_{ki}^l)h_{jj}^l.\\
    \end{split}
    \label{compati}
\end{equation}

\begin{proposition}\label{twoanglesequal}
Let $M$ be a Lagrangian submanifold of $\Sl\times\Sl$ of diagonalizable type. If two angles are equal modulo $\pi$, then $M$ is totally geodesic.
\end{proposition}
\begin{proof}
We assume that $\theta_1=\theta_2$ mod $\pi$. From (\ref{anglederi}) we get $-h_{22}^i=h_{11}^i$ and $h_{i1}^2=0$, for all $i$. Thus $h_{11}^1=h_{11}^2=h_{22}^1=h_{22}^2=h_{12}^3=0$. By Proposition \ref{minimal} the submanifold $M$ is minimal, then $-h_{11}^i+h_{22}^i+h_{33}^i=0$ for $i=1,2,3$. Hence, $h_{33}^1$ and $h_{33}^2$ also vanish. The remaining components are related by $h_{11}^3=-h_{22}^3$ and  $h_{33}^3=2h_{11}^3$. Taking $i=2,j=3,k=1$ in (\ref{compati}) we obtain $0=(\omega_{21}^3-\omega_{12}^3)h_{11}^3$. Computing both sides of the Codazzi equation in (\ref{Codazzi}) with $X=E_1,Y=E_2,Z=E_2$ yields
\[
0=h_{11}^3(\sqrt{\tfrac{2}{3}}-3\omega_{12}^3+\omega_{21}^3).
\]
Suppose $h_{11}^3\neq 0$. Then $\omega_{12}^3=\omega_{21}^3=\tfrac{1}{\sqrt{6}}$ and hence by (\ref{sffc}) we have $0=(-\tfrac{1}{\sqrt{6}}-\tfrac{1}{\sqrt{6}})\sin(\theta_1-\theta_3)$, so $\theta_1=\theta_3$ mod $\pi$. 
By (\ref{anglederi}) $h_{11}^3=-h_{33}^3=0$, which is a contradiction. 
Thus, $h_{11}^3$ must be zero and therefore the submanifold is totally geodesic. 
The proof is similar (up to signs) for a different choice of the pair of angles.
\end{proof}

\begin{lemma}\label{isometrias}
 Let $f:M\to\Sl\times\Sl$ be a Lagrangian immersion into the pseudo-nearly Kähler $\Sl\times\Sl$, and assume that $M$ is of diagonalizable type with $\Delta_1$-orthonormal frame $\{E_1,E_2,E_3\}$. 
 Let $\theta_1,\theta_2,\theta_3$ be their respective angle functions. 
 Then the Lagrangian immersions $\Psi_{1,0}\circ f$ , $\Psi_{1,4\pi/3}\circ f$, where $\Psi_{\kappa,\tau}$ are the isometries given in \emph{(\ref{isoslsl})}, are also of diagonalizable type and their respective angle functions are
\[
\theta_i^{(1)}=\pi-\theta_i, \ \ \ \ \theta_i^{(2)}=\frac{\pi}{3}-\theta_i.
\]

\end{lemma}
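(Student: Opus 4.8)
The plan is to track how the two isometries interact with the almost complex structure $J$ and the almost product structure $P$, since in case 1 of Lemma \ref{propAB} the angle functions are read off precisely from the decomposition $PE_i=\cos(2\theta_i)E_i+\sin(2\theta_i)JE_i$. First I would differentiate $\phi_1$ and $\phi_2$ explicitly. For $\phi_1(p,q)=(q,p)$ this is immediate: $\phi_{1*}(a\alpha,b\beta)=(b\beta,a\alpha)$. For $\phi_2(p,q)=(p^{-1},qp^{-1})$, differentiating a curve through $(a,b)$ and rewriting the result in the canonical left-translated form $(\,\cdot\,\gamma,\,\cdot\,\delta)$ at the image point produces the inner automorphism $C_a(\xi)=a\xi a^{-1}$ of $\mathfrak{sl}(2,\R)$; concretely one finds $\phi_{2*}(a\alpha,b\beta)=(a^{-1}(-C_a\alpha),(ba^{-1})C_a(\beta-\alpha))$.

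From these formulas I would establish the two structural identities that do all the work. Comparing $J\phi_{i*}$ with $\phi_{i*}J$ on a general vector shows that both maps are anti-holomorphic, i.e. $\phi_{i*}J=-J\phi_{i*}$; here $C_a$ drops out because it is an inner automorphism. For the product structure, $\phi_1$ commutes with $P$, that is $\phi_{1*}P=P\phi_{1*}$, whereas a short linear-algebra computation expressing $P\phi_{2*}$ in the basis $\{\id,P,J,JP\}$ of structure operators yields
\[
P\phi_{2*}=\phi_{2*}\Bigl(-\tfrac12 P-\tfrac{\sqrt3}{2}JP\Bigr).
\]
As a consistency check, $\phi_1$ and $\phi_2$ are both involutions (one verifies $\phi_2^2=\id$ directly), and conjugating $P$ twice by either returns $P$.

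Finally I would push the frame forward and read off the angles. Since $\phi_i$ is an isometry, $E_i':=\phi_{i*}E_i$ is again a $\Delta_1$-orthonormal frame, and $\phi_i\circ f$ is again Lagrangian because $\phi_{i*}J=-J\phi_{i*}$ sends the normal space $JT_pM$ onto the normal space of the image. Using $\phi_{i*}JE_i=-JE_i'$ together with the two identities above, for $\phi_1$ one gets $PE_i'=\cos(2\theta_i)E_i'-\sin(2\theta_i)JE_i'$, so $2\theta_i^{(1)}\equiv-2\theta_i$, hence $\theta_i^{(1)}=\pi-\theta_i$ modulo $\pi$; and for $\phi_2$, after substituting $JPE_i=\cos(2\theta_i)JE_i-\sin(2\theta_i)E_i$ and collecting terms,
\[
PE_i'=\cos\!\bigl(\tfrac{2\pi}{3}-2\theta_i\bigr)E_i'+\sin\!\bigl(\tfrac{2\pi}{3}-2\theta_i\bigr)JE_i',
\]
giving $\theta_i^{(2)}=\frac\pi3-\theta_i$. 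In particular the new operators $A'$ and $B'$ are again diagonal, so both immersions are of diagonalizable type, and the constraint $\theta_1+\theta_2+\theta_3\equiv0$ is preserved.

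The main obstacle is the explicit differentiation of $\phi_2$: the base point moves under the map and the inversion forces one to rewrite the differential back into the canonical form $a^{-1}\gamma$, which is what introduces the conjugation $C_a$. Checking that $C_a$ cancels cleanly in $\phi_{2*}J=-J\phi_{2*}$ and in the expression for $P\phi_{2*}$ (so that these become genuine, pointwise-independent operator identities) is the delicate bookkeeping; once this is in hand, everything else is elementary trigonometry.
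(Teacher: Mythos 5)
Your proof is correct: I verified the differential $\phi_{2*}(a\alpha,b\beta)=(a^{-1}(-C_a\alpha),(ba^{-1})C_a(\beta-\alpha))$, the anti-holomorphicity $\phi_{i*}J=-J\phi_{i*}$, the identities $P\phi_{1*}=\phi_{1*}P$ and $P\phi_{2*}=\phi_{2*}\bigl(-\tfrac12 P-\tfrac{\sqrt3}{2}JP\bigr)$, and the resulting angle transformations all check out. The paper itself gives no computation here --- it only cites the analogous Theorems 3 and 4 for $\Ss^3\times\Ss^3$ and asserts the argument carries over --- so your write-up is essentially the intended approach, carried out explicitly and self-contained.
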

\begin{proof}
      The proofs of Theorem 3 and Theorem 4 of \cite{constantangles} can be replicated for this case, taking the adjugate matrix instead of the conjugate of a quaternion.
\end{proof}
\begin{lemma}\label{trescasos}
Consider a totally geodesic Lagrangian submanifold of diagonalizable type of the pseudo-nearly Kähler $\Sl\times\Sl$. 
After a possible permutation of the angles, we have one of the following:
\begin{enumerate}
        \item $(2\theta_1,2\theta_2,2\theta_3)=(\tfrac{4\pi}{3},\tfrac{4\pi}{3},\tfrac{4\pi}{3})$, \label{diag1}
        \item $(2\theta_1,2\theta_2,2\theta_3)=(0,\pi,\pi)$, \label{diag2}
        \item $(2\theta_1,2\theta_2,2\theta_3)=(\pi,\pi,0)$. \label{diag3}
\end{enumerate}

\end{lemma}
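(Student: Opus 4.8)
The plan is to extract strong pointwise constraints on the angle functions from the Codazzi equation, and then to use the isometries of Lemma~\ref{isometrias} together with a relabelling of the frame to reduce the resulting finite list of admissible angle triples to exactly three representatives.

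Since $M$ is of diagonalizable type, $A$ and $B$ are given by the first case of Lemma~\ref{propAB}, so with respect to the $\Delta_1$-orthonormal frame $\{E_1,E_2,E_3\}$ they are diagonal, namely $A=\mathrm{diag}(\cos 2\theta_1,\cos 2\theta_2,\cos 2\theta_3)$ and $B=\mathrm{diag}(\sin 2\theta_1,\sin 2\theta_2,\sin 2\theta_3)$, subject to $\theta_1+\theta_2+\theta_3\equiv 0$ modulo $\pi$; moreover the angles are constant because $M$ is totally geodesic. First I would feed $h\equiv 0$ into the Codazzi equation~(\ref{Codazzi}). Its left-hand side then vanishes identically, so the right-hand side must vanish for all $X,Y,Z$. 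Evaluating it at $X=E_i$, $Y=Z=E_j$ with $i\neq j$ and using that $A$ and $B$ are diagonal in this orthonormal frame collapses the right-hand side to a multiple of $\sin 2(\theta_i-\theta_j)\,JE_i$. Hence $\sin 2(\theta_i-\theta_j)=0$, that is, $2\theta_i\equiv 2\theta_j$ modulo $\pi$ for every pair $i\neq j$.

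Next I would combine this with the trace condition $2\theta_1+2\theta_2+2\theta_3\equiv 0$ modulo $2\pi$. Writing each $2\theta_i$ as $\psi$ or $\psi+\pi$ for a common $\psi\in[0,\pi)$ and letting $n$ count how many of them equal $\psi+\pi$, the sum condition becomes $3\psi+n\pi\equiv 0$ modulo $2\pi$; solving this for $n\in\{0,1,2,3\}$ yields a short explicit list of admissible triples $(2\theta_1,2\theta_2,2\theta_3)$. Finally I would act on this list by the symmetries at our disposal: the isometries of Lemma~\ref{isometrias} send $2\theta_i\mapsto -2\theta_i$ and $2\theta_i\mapsto \tfrac{2\pi}{3}-2\theta_i$ simultaneously for all $i$, and their compositions realize the three translations $2\theta_i\mapsto 2\theta_i+\tfrac{2\pi}{3}k$ with $k\in\{0,1,2\}$; together with the interchange of the two spacelike directions $E_2,E_3$ these reduce the list to the three cases~(\ref{diag1}), (\ref{diag2}) and~(\ref{diag3}) in the statement.

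The main obstacle is seeing why cases~(\ref{diag2}) and~(\ref{diag3}) are both necessary rather than being identified with one another. All of the symmetries above act diagonally on the three angles, and the only genuine permutation available swaps the two spacelike frame vectors; none of them can move a distinguished angle out of the timelike slot $E_1$ into a spacelike slot. Consequently the position of the distinguished angle relative to the causal type of the frame is an invariant, which separates the triple $(0,\pi,\pi)$, with the distinguished value in the timelike direction, from $(\pi,\pi,0)$, with it in a spacelike direction. This is exactly the extra case, compared with the Riemannian $\Ss^3\times\Ss^3$, announced in the introduction, and it is reflected intrinsically in the fact that the two induced metrics are Berger-type deformations stretched in a timelike, respectively spacelike, direction.
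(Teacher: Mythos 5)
Your proposal is correct and follows essentially the same route as the paper: the Codazzi equation with $X=E_i$, $Y=Z=E_j$ gives $\sin 2(\theta_i-\theta_j)=0$, which combined with $\theta_1+\theta_2+\theta_3\equiv 0 \pmod\pi$ produces the same finite list of triples, reduced to three representatives by the isometries of Lemma~\ref{isometrias} and the swap of the spacelike directions $E_2,E_3$. Your closing observation that the timelike slot cannot be permuted with a spacelike one is exactly the paper's reason for keeping cases~(\ref{diag2}) and~(\ref{diag3}) separate.
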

\begin{proof}
      Taking $X=E_i$, $Y=E_j$ and $Z=E_j$ in the Codazzi equation yields $\sin(2(\theta_i-\theta_j))=0$, thus all pairs of angles $2\theta_i$ and $2\theta_j$ are equal up to a multiple of $\pi$. This, together with the fact that the sum of the angles is equal to zero modulo $2\pi$,  implies that $6\theta_i=0$ modulo $\pi$ for all $i$. Therefore, replacing $E_2$ with $E_3$ and $E_3$ with $-E_2$ if necessary, we obtain the following possibilities.

      \vspace{0.2 cm}
      
        \begin{tasks}[label=(\arabic*)](2)
            \task \ $(2\theta_1,2\theta_2,2\theta_3)=(\tfrac{4\pi}{3},\tfrac{4\pi}{3},\tfrac{4\pi}{3})$,\label{c1}
            \task \ $(2\theta_1,2\theta_2,2\theta_3)=(\tfrac{2\pi}{3},\tfrac{2\pi}{3},\tfrac{2\pi}{3})$,\label{c2}
            \task \ $(2\theta_1,2\theta_2,2\theta_3)=(0,0,0)$,\label{c3}
            \task \ $(2\theta_1,2\theta_2,2\theta_3)=(0,\pi,\pi)$,\label{c4}
            \task \ $(2\theta_1,2\theta_2,2\theta_3)=(\pi,\pi,0)$,\label{c5}
            \task \ $(2\theta_1,2\theta_2,2\theta_3)=(\tfrac{\pi}{3},\tfrac{\pi}{3},\tfrac{4\pi}{3})$,\label{c6}
            \task \ $(2\theta_1,2\theta_2,2\theta_3)=(\tfrac{4\pi}{3},\tfrac{\pi}{3},\tfrac{\pi}{3})$,\label{c7}
            \task \ $(2\theta_1,2\theta_2,2\theta_3)=(\tfrac{2\pi}{3},\tfrac{5\pi}{3},\tfrac{5\pi}{3})$,\label{c8}
            \task \ $(2\theta_1,2\theta_2,2\theta_3)=(\tfrac{5\pi}{3},\tfrac{5\pi}{3},\tfrac{2\pi}{3})$.\label{c9}
            \task []
        \end{tasks}

        Suppose that $M$ is a Lagrangian submanifold of diagonalizable type with angles as in \ref{c3}.
         Then by Lemma \ref{isometrias}, applying the isometry $\Psi_{1,4\pi/3}$ in (\ref{isoslsl}) produces a congruent Lagrangian submanifold with angles as in \ref{c1}. 
         We can apply a similar argument to see that cases \ref{c2}, \ref{c3}, \ref{c6}, \ref{c7}, \ref{c8} and \ref{c9} are congruent to one of the cases \ref{c1}, \ref{c4} or \ref{c5}.
          Hence we may restrict our attention to the latter cases. 
         Notice that in cases \ref{c4} and \ref{c5} we cannot do a permutation with $E_1$ since, unlike $E_2$ and $E_3$, this vector is timelike. 
         Later on we will prove that these cases are not isometric.
\end{proof}

We proceed by giving three examples of totally geodesic Lagrangian submanifolds of the nearly Kähler $\Sl\times\Sl$. 

Consider the frame $\{X_1,X_2,X_3\}$  given by 
\begin{equation}
    \begin{split}
        X_1(a)&=a\ii,
        \ \ \ \ \ 
        X_2(a)=a\jj,
        \ \ \ \ \ 
        X_3(a)=a\kk,
    \end{split}\label{frameX}
\end{equation}
for which 
\[
-\langle X_1,X_1 \rangle =\langle X_2,X_2\rangle=\langle X_3,X_3\rangle=1.
\]
where $\li,\ri$ is the metric given in (\ref{prodsl2}). Then it is a $\Delta_1$-orthonormal frame.

\begin{example} \label{example1} Consider the immersion of $\Sl$ into $\Sl\times\Sl$ given by
\[
f\colon \Sl\to \Sl\times\Sl\colon u\mapsto (\id_2,u). 
\]
Let $X_1$, $X_2$ and $X_3$ be the vector fields on $\Sl$ given in Equation (\ref{frameX}). We have
\[
df(X_1(u))=(0,u\ii)_{(\id_2,u)}, \ \ \ df(X_2(u))=(0,u\jj)_{(\id_2,u)}, \ \ \ df(X_3(u))=(0,u\kk)_{(\id_2,u)}.
\]
Then we compute 
\[
Pdf(X_1(u))=(\ii,0)_{(\id_2,u)}, \ \ \ Pdf(X_2(u))=(\jj,0)_{(\id_2,u)}, \ \ \ Pdf(X_3(u))=(\kk,0)_{(\id_2,u)},
\]
and
\begin{equation*}
\begin{split}
    Jdf(X_1(u))=-\frac{1}{\sqrt{3}}(2\ii,u\ii)_{(\id_2,u)}, \\
    Jdf(X_2(u))=-\frac{1}{\sqrt{3}}(2\jj,u\jj)_{(\id_2,u)}, \\ 
    Jdf(X_3(u))=-\frac{1}{\sqrt{3}}(2\kk,u\kk)_{(\id_2,u)}. \\
\end{split}    
\end{equation*}
From this, it immediately follows that $g(df(X_i(u)),Jdf(X_j(u)))=0$ for all $i,j=1,2,3$. Therefore $f$ is a Lagrangian immersion.
Moreover, we notice that
\[
Pdf(X_i(u))=-\tfrac{1}{2}df(X_i(u))-\tfrac{\sqrt{3}}{2}Jdf(X_i(u),
\]
hence the angle functions are constant and all are equal to $\tfrac{4}{3}\pi$, as in Case (\ref{diag1}) of Lemma \ref{trescasos}. Using Proposition \ref{twoanglesequal} we conclude that $f$ is totally geodesic.
\end{example}

\begin{example}\label{example2}
Consider the immersion of $\Sl$ into $\Sl\times\Sl$ given by 
\[f:\Sl\to\Sl\times\Sl\colon u\mapsto(u,\ii u\ii).\]
We may compute
\[
df(X_1(u))=(u\ii,-\ii u)_{(u,\ii u\ii)}, \ \ \ df(X_2(u))=(u\jj,-\ii u\kk)_{(u,\ii u\ii)}, \ \ \ df(X_3(u))=(u\kk,\ii u\jj)_{(u,\ii u\ii)}.
\]
By the definition of $J$ we get
\begin{equation*}
    \begin{split}
        Jdf(X_1(u))&=-\frac{1}{\sqrt{3}}(u\ii,\ii u)_{(u,\ii u\ii)},\\
        Jdf(X_2(u))&=\sqrt{3}(u\jj,\ii u\kk)_{(u,\ii u\ii)},\\
        Jdf(X_3(u))&=\sqrt{3}(u\kk,-\ii u\jj)_{(u,\ii u\ii)}.\\
    \end{split}
\end{equation*}
Hence, we can easily compute $g(df(X_i(u),Jdf(X_j(u)))=0$ for $i,j=1,2,3$, therefore $f$ is a Lagrangian immersion.
Moreover, we have
\begin{equation*}
    \begin{split}
        Pdf(X_1(u))&=(u\ii,-\ii u)_{(u,\ii u\ii)}=df(X_1(u)), \\
        Pdf(X_2(u))&=(-u\jj,\ii u\kk)_{(u,\ii u\ii)}=-df(X_2(u)), \\
        Pdf(X_3(u))&=(-u\kk,-\ii u\jj)_{(u,\ii u\ii)}=-df(X_3(u)).
    \end{split}
\end{equation*}
 Consequently, the angle functions are constant and equal to $(0,\pi,\pi)$, as in Case (\ref{diag2}) of Lemma \ref{trescasos}. By Proposition (\ref{twoanglesequal}) $f$ is totally geodesic.
\end{example}

\begin{example}\label{example3}
 Consider the immersion of $\Sl$ into $\Sl\times\Sl$ given by
\[
f\colon \Sl\to \Sl\times\Sl\colon u\mapsto (u,\kk u\kk). 
\]
We have
\[
df(X_1(u))=(u\ii,-\kk u\jj)_{(u,\kk u\kk )}, \ \ \ df(X_2(u))=(u\jj,-\kk u\ii)_{(u,\kk u\kk )}, \ \ \ df(X_3(u))=(u\kk,\kk u)_{(u,\kk u\kk)}.
\]
By definition of  $J$ we obtain
\begin{equation*}
    \begin{split}
        Jdf(X_1(u))&=\sqrt{3}(u\ii,\kk u\jj)_{(u,\kk u\kk)},\\
        Jdf(X_2(u))&=\sqrt{3}(u\jj,\kk u\ii)_{(u,\kk u\kk)},\\
        Jdf(X_3(u))&=\frac{1}{\sqrt{3}}(-u\kk, u\kk)_{(u,\kk u\kk)}.\\
    \end{split}
\end{equation*}
Hence we can easily compute $g(df(X_i(u),Jdf(X_j(u)))=0$ for $i,j=1,2,3$, therefore $f$ is a Lagrangian immersion. Moreover we have that
\begin{equation*}
    \begin{split}
        Pdf(X_1(u))&=(-u\ii,\kk u\jj)_{(u,\kk u\kk)}=-df(X_1(u)), \\
        Pdf(X_2(u))&=(-u\jj,\kk u\ii)_{(u,\kk u\kk)}=-df(X_2(u)), \\
        Pdf(X_3(u))&=(u\kk,\kk u)_{(u,\kk u\kk)}=df(X_3(u)).
    \end{split}
\end{equation*}
Therefore, the angle functions are constant and equal to $(\pi,\pi,0)$, corresponding to Case (\ref{diag3}) of Lemma~\ref{trescasos}. By Proposition (\ref{twoanglesequal}) $f$ is totally geodesic.
\end{example}

\begin{remark}
    The immersion $f:\Sl\to\Sl\times\Sl$ given by $f:u\mapsto (u,\jj u\jj)$ is a Lagrangian immersion congruent to Example \ref{example3}. Indeed, take the isometry of $\Sl\times\Sl$ given by $\phi:(p,q)\mapsto(cpc,cqc)$, where $c$ is the matrix in $\Sl$ given by $e^{-\tfrac{\pi}{4} \kk}$. After an easy computation we get that
    \[
    \phi\circ f (u)=(cuc,c\jj u \jj c)=(v,\kk v\kk),
    \]
    where $v=cuc$.

\end{remark}

We have now proven that the immersions in Theorem \ref{maintheorem} are Lagrangian. To complete the proof of this theorem, it remains to be shown that any totally geodesic Lagrangian immersion is locally isometric to one of the Examples \ref{example1}-\ref{example3}, depending on the possible angle functions in Lemma \ref{trescasos}. This is what we prove in the upcoming propositions.
      \begin{proposition}\label{idu}
      Let $M$ be a totally geodesic Lagrangian submanifold of the pseudo-nearly Kähler $\Sl\times\Sl$ of diagonalizable type. Assume that  $(2\theta_1,2\theta_2,2\theta_3)=(\tfrac{4\pi}{3},\tfrac{4\pi}{3},\tfrac{4\pi}{3})$. Then $M$ is locally congruent to the submanifold
      $\Sl\to\Sl\times\Sl\colon u\mapsto (\id_2,u)$. 
      \end{proposition}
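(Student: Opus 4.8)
The plan is to exploit the fact that the prescribed angle functions make the operators $A$ and $B$ scalar multiples of the identity, which rigidly pins down the tangent spaces of $M$ pointwise, and then to carry $M$ onto the model by a single ambient isometry.

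First I would read off from Lemma \ref{propAB} what the hypothesis gives. Since $2\theta_1=2\theta_2=2\theta_3=\tfrac{4\pi}{3}$, the operators are $A=\cos\tfrac{4\pi}{3}\,\id=-\tfrac12\id$ and $B=\sin\tfrac{4\pi}{3}\,\id=-\tfrac{\sqrt3}{2}\id$. Hence the decomposition $P|_M=A+JB$ becomes the pointwise identity
\[
PX=-\tfrac12 X-\tfrac{\sqrt3}{2}JX,\qquad X\in TM,
\]
in which the two summands are precisely the tangential and the normal parts of $PX$, since $M$ is Lagrangian and so $JX$ is normal.

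Next I would solve this identity algebraically at an arbitrary point $(a,b)\in M$. Writing a general ambient tangent vector as $X=(a\alpha,b\beta)$ with $\alpha,\beta\in\slf$ and inserting the explicit expressions for $J$ and $P$, the right-hand side reduces to $(a(\beta-\alpha),-b\alpha)$ while the left-hand side is $(a\beta,b\alpha)$; comparing the two components forces $\alpha=0$. Thus every tangent vector of $M$ lies in $\{0\}\times T_b\Sl$, and since both spaces are three-dimensional we get $T_{(a,b)}M=\{0\}\times T_b\Sl$ at every point. I expect this short pointwise computation to be the crux of the argument: all the rigidity comes from $A$ and $B$ being scalar, so that once the angle values of Lemma \ref{trescasos} are in hand the conclusion no longer relies further on the totally geodesic assumption.

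Finally I would integrate this tangential information. Along any curve in the (connected) submanifold $M$ the first factor has vanishing velocity, so the first coordinate is constant, say $a_0\in\Sl$; hence $M$ is an open subset of $\{a_0\}\times\Sl$. Applying the isometry $\phi_{a_0^{-1},\id_2,\id_2}$ from \eqref{isoslsl}, which sends $(p,q)\mapsto(a_0^{-1}p,q)$, carries $M$ to an open subset of $\{\id_2\}\times\Sl$, that is, onto the image of $u\mapsto(\id_2,u)$ of Example \ref{example1}. This yields the asserted local congruence.
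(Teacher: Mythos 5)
Your proposal is correct and follows essentially the same route as the paper: the scalar identity $PX=-\tfrac12X-\tfrac{\sqrt3}{2}JX$ forces the first component of every tangent vector to vanish, so $p$ is constant and $q$ is a local diffeomorphism, after which an ambient isometry normalizes $p$ to $\id_2$. The only cosmetic difference is that you verify this by a direct computation with $\alpha,\beta\in\slf$, whereas the paper phrases the same fact as $QX=X$ using the product structure $Q$ from \eqref{prodQ}.
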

      \begin{proof}
            By Case \ref{case:10.1} of Proposition \ref{propAB}, we have that $A=-\tfrac{1}{2}\id$ and $B=-\frac{\sqrt{3}}{2}$ are multiples of the identity we have that $PX=-\tfrac{1}{2}X-\tfrac{\sqrt{3}}{2}JX$ for any vector field $X$ tangent to $M$. It follows immediately that $QX=X$ where $Q$ is the almost product structure given in (\ref{prodQ}). Using the Gauss equation we can compute the curvature tensor of $M$ and also the sectional curvature, which is equal to $-\tfrac{3}{2}$. Then $M$ is locally isometric to $\Sl$ equipped with the metric $\tfrac{2}{3}g_0$,
            where $g_0$ is the metric defined in (\ref{prodsl2}). Now write $f=(p,q)$. By the definition of $Q$, we have that
            \begin{equation*}
                (dp(v),0)=\tfrac{1}{2}(df(v)-Qdf(v))=0, \ \ \ (0,dq(v))=\tfrac{1}{2}(df(v)+Qdf(v))=df(v), \ \ \ v\in T\Sl.
            \end{equation*}
            hence $p$ should be a constant matrix in $\Sl$. The previous equation also implies that $dq$ is a linear isomorphism, then $q$ is a local diffeomorphism. Therefore, we may assume that $q(u)$ is actually equal to $u$. Applying an isometry of $\Sl\times\Sl$ we may assume that $p$ is equal to $\id_2$.
      \end{proof}

The immersion in Example \ref{example2} is the immersion of $\Sl$ with a Berger-like metric, stretched in the timelike direction. 
We can construct such a metric by taking on $\Sl$ the frame $\{X_1,X_2,X_3\}$ given in (\ref{frameX}), and the metric $\tilde{g}$ given by
\[
\tilde{g}(X,Y)=\tfrac{4}{\kappa}\left(\li X,Y \ri+(1-\tfrac{4\tau^2}{\kappa})\li X,X_1\ri\li Y,X_1\ri\right),
\]
where $\kappa>0$ and $\tau$ are constants and $\li,\ri$ is the metric on $\Sl$ given in (\ref{prodsl2}). It follows from a straightforward computation that
\[
[X_1,X_2]=2X_3,\ \ \  [X_1,X_3]=-2X_2, \ \ \ [X_2,X_3]=-2X_1.
\]
We take the vector fields 
\begin{equation}
    \tilde{E}_1=\tfrac{\kappa}{4\tau}X_1,\ \tilde{E}_2=\tfrac{\sqrt{\kappa}}{2}X_2\ \text{and}\ \tilde{E}_3=\tfrac{\sqrt{\kappa}}{2}X_3,\label{lorentzbergframe}
\end{equation}
which form a $\Delta_1$-orthonormal frame on $\Sl$ with respect to the metric $\tilde{g}$. We denote the Levi-Civita connection associated to $\tilde{g}$ by $\nabla^\sim$. It follows from the Koszul formula that $\nabla^\sim_{\tilde{E}_i}\tilde{E}_i=0$ and that
\begin{align*}
        \nabla^\sim_{\tilde{E}_1}\tilde{E}_2&=(\frac{\kappa }{2 \tau }-\tau )\tilde{E}_3, &&    \nabla^\sim_{\tilde{E}_2}\tilde{E}_3=-\tau\tilde{E}_1,\\
            \nabla^\sim_{\tilde{E}_1}\tilde{E}_3&=(\tau-\frac{\kappa }{2 \tau })\tilde{E}_2, &&        \nabla^\sim_{\tilde{E}_3}\tilde{E}_1=\tau\tilde{E}_2,\\
                    \nabla^\sim_{\tilde{E}_2}\tilde{E}_1&=-\tau\tilde{E}_3, &&        \nabla^\sim_{\tilde{E}_3}\tilde{E}_2=\tau\tilde{E}_1.
\end{align*}
The following proposition is a particular case of Theorem 1.7.18 of \cite{wolf}, where it is proved for an arbitrary manifold equipped with a affine connection.
\begin{proposition}\label{propositionconstants}
    Let $N^n$ and $\tilde{N}^n$ be pseudo-Riemannian manifolds with Levi-Civita connections $\nabla$ and $\tilde{\nabla}$, respectively. Suppose that there exist constants $c_{ij}^k$, $i,j,k\in {1,\dots,n}$ such that for all $p\in N$ and $\tilde{p}\in\tilde{N}$ there exist pseudo-orthonormal frames $\{F_1,\ldots,F_n\}$ around $p$, $\{\tilde{F}_1,\dots,\tilde{F}_n\}$  around $\tilde{p}$ with the same signatures such that $\nabla_{F_i}F_j=\sum_{i=1}^n c_{ij}^kF_k$, $\tilde{\nabla}_{\tilde{F}_i}\tilde{F}_j=\sum_{i=1}^nc_{ij}^k \tilde{F}_k$. Then there exists a local isometry that maps a neighbourhood of $p$ into a neighbourhood of $\tilde{p}$ and $\{F_i\}$ to $\{\tilde{F}_i\}$.
\end{proposition}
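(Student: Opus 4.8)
The plan is to realize the desired local isometry as a leaf of an involutive distribution on the product $N\times\tilde N$, i.e.\ to run the moving-frames / Frobenius argument underlying the Cartan--Ambrose--Hicks theorem, specialized to the situation where the structure functions are the constants $c_{ij}^k$. Fix $p\in N$, $\tilde p\in\tilde N$ and fix smooth pseudo-orthonormal frame fields $\{F_1,\dots,F_n\}$ on a neighbourhood $U$ of $p$ and $\{\tilde F_1,\dots,\tilde F_n\}$ on a neighbourhood $\tilde U$ of $\tilde p$, of the same signature, with $\nabla_{F_i}F_j=\sum_k c_{ij}^kF_k$ and $\tilde\nabla_{\tilde F_i}\tilde F_j=\sum_k c_{ij}^k\tilde F_k$. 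On $U\times\tilde U$ I would introduce the $n$ vector fields $G_i:=(F_i,\tilde F_i)$, meaning the lift of $F_i$ along the first factor plus the lift of $\tilde F_i$ along the second, and set $\mathcal D:=\operatorname{span}\{G_1,\dots,G_n\}$, an $n$-dimensional distribution.

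The central step is to prove that $\mathcal D$ is involutive, and this is exactly where both hypotheses are used. Since $\nabla$ and $\tilde\nabla$ are Levi-Civita connections they are torsion free, so the frame brackets are governed purely by the skew part of the structure constants: $[F_i,F_j]=\nabla_{F_i}F_j-\nabla_{F_j}F_i=\sum_k(c_{ij}^k-c_{ji}^k)F_k$, and likewise $[\tilde F_i,\tilde F_j]=\sum_k(c_{ij}^k-c_{ji}^k)\tilde F_k$. Because lifts from different factors commute, $[G_i,G_j]=\big([F_i,F_j],[\tilde F_i,\tilde F_j]\big)=\sum_k(c_{ij}^k-c_{ji}^k)G_k\in\mathcal D$. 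Hence $\mathcal D$ is involutive and, by the Frobenius theorem, integrable; let $L$ be the integral leaf through $(p,\tilde p)$.

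I would then extract the map from $L$ by projection. The restrictions $\pi_1\colon L\to U$ and $\pi_2\colon L\to\tilde U$ of the two factor projections satisfy $d\pi_1(G_i)=F_i$ and $d\pi_2(G_i)=\tilde F_i$; since the $F_i$ and the $\tilde F_i$ are frames and $\dim L=n$, both projections are local diffeomorphisms near $(p,\tilde p)$. Thus $\Phi:=\pi_2\circ\pi_1^{-1}$ is a local diffeomorphism from a neighbourhood of $p$ to a neighbourhood of $\tilde p$ with $\Phi(p)=\tilde p$ and $d\Phi(F_i)=\tilde F_i$. Finally, because $\Phi$ carries the pseudo-orthonormal frame $\{F_i\}$ to the pseudo-orthonormal frame $\{\tilde F_i\}$ \emph{of the same signature}, it preserves the metric: $(\Phi^{*}\tilde g)(F_i,F_j)=\tilde g(\tilde F_i,\tilde F_j)=g(F_i,F_j)$, and as the $F_i$ span the tangent space this gives $\Phi^{*}\tilde g=g$. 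So $\Phi$ is the required local isometry sending $\{F_i\}$ to $\{\tilde F_i\}$.

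The only genuinely nontrivial point is the involutivity computation, which works precisely because torsion-freeness forces the brackets to depend solely on the $c_{ij}^k$ (with no extra connection data) and because these constants are identical on both factors, making the two bracket expressions coefficient-for-coefficient equal. A secondary point to state carefully is that the conclusion is purely local: the frames are a priori only defined near each point, so one works on the product $U\times\tilde U$ of neighbourhoods on which smooth frames with the prescribed structure constants are chosen, which is all that the subsequent propositions require. This argument reproves, in the Levi-Civita setting, the cited Theorem 1.7.18 of \cite{wolf}; the equal-signature orthonormality is what upgrades the affine diffeomorphism produced there to an isometry.
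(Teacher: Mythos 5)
Your proof is correct, and it is genuinely different from what the paper does: the paper offers no argument at all for this proposition, merely noting that it is a particular case of Theorem 1.7.18 of \cite{wolf}, which is stated for arbitrary affine connections and yields an affine local diffeomorphism. Your Frobenius argument on the product $N\times\tilde{N}$ is the standard proof of that theorem specialized to the present setting, and it is complete: involutivity of $\mathcal{D}=\operatorname{span}\{(F_i,\tilde{F}_i)\}$ follows from torsion-freeness, which reduces both brackets to the same constants $c_{ij}^k-c_{ji}^k$; the leaf through $(p,\tilde{p})$ projects diffeomorphically onto both factors because the $F_i$ and $\tilde{F}_i$ are frames; and the equality of signatures is exactly what upgrades the resulting affine map to an isometry, i.e.\ it supplies the step the paper implicitly takes when it converts Wolf's affine conclusion into an isometric one. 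What each approach buys: the citation is shorter and covers the general affine case, while your argument is self-contained, makes explicit where torsion-freeness and the matching signatures enter, and incidentally shows that in the Levi-Civita, pseudo-orthonormal setting only the antisymmetric part of the $c_{ij}^k$ is actually needed (the symmetric part being recoverable from the Koszul formula), so the hypothesis as stated is slightly stronger than necessary. The only points worth stating explicitly if this were written out in full are that $\mathcal{D}$ has constant rank $n$ (so Frobenius applies) and that one restricts to a single plaque of the leaf so that $\pi_1|_L$ is a genuine diffeomorphism onto a neighbourhood of $p$; both are routine.
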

We will  use this proposition to show that cases (\ref{diag2}) and (\ref{diag3}) of Lemma \ref{trescasos} are locally $\Sl$ with Berger-like metrics.
\begin{proposition}\label{uuk}
            Let $M$ be a totally geodesic Lagrangian submanifold of the pseudo-nearly Kähler $\Sl\times\Sl$, of diagonalizable type. Assume that $(2\theta_1,2\theta_2,2\theta_3)=(0,\pi,\pi)$. Then $M$ is locally isometric to the submanifold $\Sl\to\Sl\times\Sl\colon u\mapsto (u,\ii u\ii)$. 
\end{proposition}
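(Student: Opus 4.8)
The plan is to determine the induced Levi-Civita connection of $M$ completely, to show that in a suitably chosen frame all of its coefficients are constants, and to recognise these as the structure constants of the Berger-like metric $\tilde g$; Proposition \ref{propositionconstants} then yields the local isometry with the image of Example \ref{example2}. First I would read off the operators from Lemma \ref{propAB}: the angles $(0,\pi,\pi)$ place us in case 1 with $A=\operatorname{diag}(1,-1,-1)$ and $B=0$ with respect to the $\Delta_1$-orthonormal frame $\{E_1,E_2,E_3\}$, so that $PE_1=E_1$, $PE_2=-E_2$, $PE_3=-E_3$. Since $M$ is totally geodesic, $\tilde\nabla_{E_i}E_j=\nabla_{E_i}E_j=\sum_k\omega_{ij}^kE_k$.

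Feeding each pair $(E_i,E_j)$ into \eqref{nablap} and comparing tangential components, I expect every coefficient to be forced to an explicit constant except the three ``rotation'' coefficients $p:=\omega_{12}^3$, $a:=\omega_{22}^3$, $b:=\omega_{32}^3$ (the $E_3$-components of $\nabla_{E_i}E_2$); concretely one should get $\nabla_{E_1}E_1=0$, $\nabla_{E_2}E_1=-\tfrac{1}{\sqrt6}E_3$, $\nabla_{E_3}E_1=\tfrac{1}{\sqrt6}E_2$ together with their $\Delta_1$-symmetric partners. The coefficients $\omega_{i2}^3$ drop out of \eqref{nablap} precisely because $E_2$ and $E_3$ carry the same angle, so $P$ acts on them by the same scalar and the relevant terms cancel. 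With $B=0$ the Codazzi equation \eqref{Codazzi} is identically satisfied, so it yields no further information.

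To pin down $p,a,b$ I would exploit the freedom to rotate $E_2,E_3$: since $\theta_2=\theta_3$, any frame $E_2'=\cos\psi\,E_2+\sin\psi\,E_3$, $E_3'=-\sin\psi\,E_2+\cos\psi\,E_3$ still realises $A=\operatorname{diag}(1,-1,-1)$, $B=0$, and the one-form $\eta$ with $\eta(E_i)=\omega_{i2}^3$ transforms by $\eta\mapsto\eta-d\psi$. The aim is to choose $\psi$ with $E_2\psi=a$, $E_3\psi=b$ and $E_1\psi=p-\tfrac{5}{\sqrt6}$, which in the new frame makes $a,b$ vanish and turns $p$ into the constant $\tfrac{5}{\sqrt6}$. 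The hard part is the existence of such a $\psi$: its three integrability conditions, read off from $[E_i,E_j]\psi$, turn out to coincide exactly with the three independent scalar equations produced by the Gauss equation \eqref{Gauss} (with $h=0$, $B=0$). I would verify this by computing $R(E_2,E_3)E_3$, $R(E_1,E_2)E_2$ and $R(E_1,E_3)E_3$ from the $\omega_{ij}^k$ and equating them with the right-hand side of \eqref{Gauss}, where $A=\operatorname{diag}(1,-1,-1)$ produces curvature data such as $R(E_2,E_3)E_3=-\tfrac32E_2$. The Poincaré lemma then gives $\psi$ locally, and in the rotated frame all connection coefficients are constant.

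Finally, these constants coincide with the coefficients $\nabla^\sim_{\tilde E_i}\tilde E_j$ of the Berger-like frame \eqref{lorentzbergframe} for $\kappa=2$ and $\tau=\tfrac{1}{\sqrt6}$ (indeed $\tfrac{\kappa}{2\tau}-\tau=\tfrac{5}{\sqrt6}$ and $\tau=\tfrac{1}{\sqrt6}$), and the signatures agree because $E_1$ is the unique timelike vector. Applying Proposition \ref{propositionconstants} to $M$ and to $\Sl$ endowed with this metric $\tilde g$ yields a local isometry, and since $\tilde g$ is precisely the metric induced by the immersion $u\mapsto(u,\ii u\ii)$ of Example \ref{example2}, the submanifold $M$ is locally isometric to its image. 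The main obstacle I anticipate is the bookkeeping in extracting the $\omega_{ij}^k$ from \eqref{nablap} and, above all, the observation that the integrability of the rotation angle $\psi$ is governed exactly by the Gauss equation.
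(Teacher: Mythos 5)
Your intrinsic argument reproduces the first half of the paper's proof almost verbatim: with $h=0$ and $\theta_1=0$, $\theta_2=\theta_3=\pi/2$, equation \eqref{sffc} fixes every connection coefficient to the constants you list except the three $\omega_{i2}^3$ (which escape precisely because $\theta_2=\theta_3$), the Codazzi equation is vacuous since $B=0$, and the integrability condition for your rotation angle $\psi$ is indeed equivalent to the three Gauss identities \eqref{equationstg0pipi} — the paper packages this as the closedness of the one-form $\omega$ with $\omega(E_1)=-\omega_{12}^3+\tfrac{5}{\sqrt6}$, $\omega(E_2)=-\omega_{22}^3$, $\omega(E_3)=\omega_{33}^2$. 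After the rotation, Proposition \ref{propositionconstants} gives the local isometry with the Berger-like $\Sl$ with $\kappa=2$ and $\tau=\pm\tfrac{1}{\sqrt6}$ (your sign of $\tau$ differs from the paper's only through the orientation of the frame, e.g.\ $F_1=-E_1$). Two points of divergence. First, you assert but do not verify that this Berger-like metric is the one induced by $u\mapsto(u,\ii u\ii)$; the quickest repair is to observe that the image of Example \ref{example2} is itself a totally geodesic Lagrangian submanifold with angles $(0,\pi,\pi)$, so the identical frame computation applies to it and Proposition \ref{propositionconstants} can be invoked directly between $M$ and that image. Second, and more substantively, the paper's proof does not stop at the intrinsic isometry: it writes $df(F_i)=(p\alpha_i,q\beta_i)$ with $\alpha_i,\beta_i\in\slf$, uses \eqref{Pzeropipi}, \eqref{prodmetric} and \eqref{connectionr8} to show the $\alpha_i$ are constant, and integrates the resulting linear system to obtain $p(u)=cuc^{-1}$, $q(u)=-c\,\ii u\ii\, c^{-1}$, i.e.\ local \emph{congruence} of the immersion rather than mere isometry of induced metrics. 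Your proof establishes the proposition as literally worded, but it is this extrinsic integration (together with the determination of the $\omega_{ij}^k$ in the frame $F_i$) that the proof of Theorem \ref{maintheorem} cites, so omitting it would push that work elsewhere.
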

\begin{proof}
        Let $\{E_1,E_2,E_3\}$ be a $\Delta_1$-orthonormal frame such that $JG(E_1,E_2)=\sqrt{\tfrac{2}{3}}E_3$ and $A$ and $B$ take the form in \ref{case:10.1} of Lemma \ref{propAB} with $(2\theta_1,2\theta_2,2\theta_3)=(0,\pi,\pi)$. Thus
        \begin{equation}
             PE_1=E_1, \ \ \ \ PE_2=-E_2, \ \ \ PE_3=-E_3.      \label{Pzeropipi}
        \end{equation}
        From Lemma \ref{lemmacase1} we obtain that $\omega_{i1}^j=0$ for $i,j=1,2,3$. We also get
\[
\omega_{21}^3=-\frac{1}{\sqrt{6}},\ \ \ \  \omega_{32}^1=\frac{1}{\sqrt{6}}.
\]

From the Gauss Equation (\ref{Gauss}) we obtain the following equations:
\begin{equation}
    \begin{split}
       E_1(\omega_{22}^3)-E_2(\omega_{12}^3)+\left(\omega_{12}^3+\frac{1}{\sqrt{6}}\right) \omega_{33}^2=0,\\
       -E_1(\omega_{33}^2)-E_3(\omega_{12}^3)+\left(\omega_{12}^3+\frac{1}{\sqrt{6}}\right) \omega_{22}^3=0,\\
       E_2(\omega_{33}^2)+E_3(\omega_{22}^3)-\sqrt{\frac{2}{3}} \omega_{12}^3-(\omega_{22}^3)^2-(\omega_{33}^2)^2+\frac{5}{3}=0.\\
    \end{split}\label{equationstg0pipi}
\end{equation}
Define the 1-form $\omega$ by 
\[
\omega(E_1)=-\omega_{12}^3+\frac{5}{\sqrt{6}}, \ \ \ \omega(E_2)=-\omega_{22}^3, \ \ \ \ \omega(E_3)=\omega_{33}^2.
\]
Using (\ref{equationstg0pipi}) we can prove that $\omega$ is closed. Hence there exists a local function $\varphi$ such that $d\varphi=\omega$.
Now define the new frame 
\[
F_1=-E_1,\ \ \ \  F_2=-\cos\varphi E_2-\sin\varphi E_3,\ \ \ \ \ F_3=\sin\varphi E_2-\cos\varphi E_3.
\]
 This new frame is still $\Delta_1$-orthonormal which satisfies (\ref{Pzeropipi}) and $JG(F_1,F_2)=\sqrt{\tfrac{2}{3}}F_3$.  We have
\begin{align*}
            \nabla_{F_1}F_1&=0&&         \nabla_{F_1}F_2=-\frac{5}{\sqrt{6}}F_3,&&         \nabla_{F_1}F_3=\frac{5}{\sqrt{6}}F_2,\\
            \nabla_{F_2}F_1&=\frac{1}{\sqrt{6}}F_3, &&       \nabla_{F_2}F_2=0,  &&        \nabla_{F_2}F_3=\frac{1}{\sqrt{6}}F_1,\\
            \nabla_{F_3}F_1&=-\frac{1}{\sqrt{6}}F_2,  && \nabla_{F_3}F_2=-\frac{1}{\sqrt{6}}F_1, &&  \nabla_{F_3}F_3=0.
\end{align*}
By Proposition \ref{propositionconstants} we have that $M$ is locally isometric to $\Sl$ with a Berger-like metric stretched in the timelike direction  with $\tau=-\frac{1}{\sqrt{6}}$ and $\kappa=2$.
Now using (\ref{relprodkal}) we may write
\begin{align}
      \nabla^E _{F_1}F_1&=0, &&  \nabla^E _{F_1}F_2=-\sqrt{\frac{3}{2}}F_3, &&        \nabla^E _{F_1}F_3=\sqrt{\frac{3}{2}}F_2,\nonumber \\
      \nabla^E _{F_2}F_1&=\sqrt{\frac{3}{2}}F_3, &&    \nabla^E _{F_2}F_2=0, &&         \nabla^E _{F_2}F_3=\frac{1}{\sqrt{6}}F_1, \label{connF}\\
      \nabla^E _{F_3}F_1&=-\sqrt{\frac{3}{2}}F_2, && \nabla^E _{F_3}F_2=-\frac{1}{\sqrt{6}}F_1, &&         \nabla^E _{F_3}F_3=0.\nonumber
\end{align}
where $\nabla^E$ is the Levi-Civita connection associated to the product metric. We can identify the frame $\{F_i\}_i$ on $\Sl$ with the frame given in (\ref{lorentzbergframe}), i.e.,
\begin{equation}
     F_1=-\sqrt{\frac{3}{2}}X_1,\ \ \ \  F_2=\frac{1}{\sqrt{2}}X_2, \ \ \ \ F_3=\frac{1}{\sqrt{2}}X_3.\label{identificationberger}
\end{equation}

Now writing the immersion $f=(p,q)$ and $df(F_i)=D_{F_i}f=(p\alpha_i,q\beta_i)$, where $\alpha_i,\beta_i$ are matrices in $\mathfrak{sl}(2,\R)$, we obtain
\begin{equation}
\beta_1=\alpha_1, \ \ \ \ \ \beta_2=-\alpha_2, \ \ \ \ \ \beta_3=-\alpha_3.\label{eqalfabeta}
\end{equation}
because of Equation (\ref{Pzeropipi}).

It follows from (\ref{prodmetric}) that $\alpha_i$ are mutually orthogonal and their lengths are given by
\[
\li \alpha_1,\alpha_1\ri=-\frac{3}{2}, \ \ \ \ \li\alpha_2,\alpha_2\ri=\li\alpha_3,\alpha_3\ri=\frac{1}{2}.
\]
Thus we can write the Lorentzian cross products as
\begin{equation*}
    \alpha_1\times\alpha_2=\varepsilon \sqrt{\frac{3}{2}}\alpha_3, \ \ \ \alpha_2\times\alpha_3=-\varepsilon \frac{1}{\sqrt{6}}\alpha_1,  \ \ \ \ \ \alpha_3\times\alpha_1=\varepsilon \sqrt{\frac{3}{2}}\alpha_2,
\end{equation*}
where $\varepsilon=\pm1$.
We compute
\[
D_{F_i}D_{F_j}f=(p\alpha_i\times\alpha_j+\li\alpha_i,\alpha_j\ri p+pF_i(\alpha_j),\ q\beta_i\times\beta_j+\li\beta_i,\beta_j\ri q+q F_i(\beta_j)),
\]
therefore applying (\ref{connectionr8}) it follows that
\[
\nabla^E_{F_i}F_j=(p\alpha_i\times\alpha_j+pF_i(\alpha_j),\ q\beta_i\times\beta_j+ q F_i(\beta_j)).
\]
Comparing the above equation with (\ref{connF}) we obtain
\begin{align*}
        F_1(\alpha_1)&=0,     &&    F_2(\alpha_1)=\sqrt{\frac{3}{2}}(1+\varepsilon)\alpha_3, && F_3(\alpha_1)=-\sqrt{\frac{3}{2}}(1+\varepsilon)\alpha_3,\\
    F_1(\alpha_2)&=-\sqrt{\frac{3}{2}}(1+\varepsilon)\alpha_3, &&    F_2(\alpha_2)=0,&&  F_3(\alpha_2)=-\frac{1}{\sqrt{6}}(1+\varepsilon)\alpha_1,\\ 
    F_1(\alpha_3)&=\sqrt{\frac{3}{2}}(1+\varepsilon)\alpha_2,&&    F_2(\alpha_3)=\frac{1}{\sqrt{6}}(1+\varepsilon)\alpha_1,&&    F_3(\alpha_3)=0. 
\end{align*}
Making use of (\ref{identificationberger}) yields
\begin{align*}
      X_1(\alpha_1)&=0, &&    X_2(\alpha_1)=\sqrt{3}(1+\varepsilon)\alpha_3, &&     X_3(\alpha_1)=-\sqrt{3}(1+\varepsilon)\alpha_3,\\
          X_1(\alpha_2)&=(1+\varepsilon)\alpha_3, &&    X_2(\alpha_2)=0,    &&  X_3(\alpha_2)=-\frac{1}{\sqrt{3}}(1+\varepsilon)\alpha_1,\\
              X_1(\alpha_3)&=-(1+\varepsilon)\alpha_2, &&    X_2(\alpha_3)=\frac{1}{\sqrt{3}}(1+\varepsilon)\alpha_1, &&    X_3(\alpha_3)=0.
\end{align*}
We can write the same equations for $\beta_i$. Taking into account (\ref{eqalfabeta}), we conclude $\varepsilon$ must be equal to $-1$. Therefore $\alpha_i$ is constant for all $i$. 
We can choose an isometry on $\Sl$, namely conjugation by a matrix $c$ in $\Sl$, such that
\begin{equation*}
    \alpha_1=-\sqrt{\frac{3}{2}}c\ii c^{-1},\ \ \ \  \alpha_2=\frac{1}{\sqrt{2}}c\jj c^{-1},\ \ \ \  \alpha_3=\frac{1}{\sqrt{2}}c\kk c^{-1}.
\end{equation*}
After choosing initial conditions $f(\id_2)=(\id_2,\id_2)$, we obtain that the unique solution of the system $D_{F_i}f=(p\alpha_i,q\beta_i)$ is $p(u)=cuc^{-1}$, $q(u)=-c\ii u\ii c^{-1}$. Taking an isometry on the pseudo-nearly Kähler $\Sl\times\Sl$ finally yields $p(u)=u$ and $q(u)=\ii u\ii$.
\end{proof}

The third example of a totally geodesic immersion is locally isometric to $\Sl$ with a Berger-like metric, which is stretched in the direction of a spacelike component. We can construct such a metric by taking on $\Sl$ the frame $\{X_1,X_2,X_3\}$, and the metric $\tilde{g}$ given by
\[
\tilde{g}(X,Y)=\tfrac{4}{\kappa}\left(\li X,Y \ri+(\tfrac{4\tau^2}{\kappa}-1)\li X,X_3\ri\li Y,X_3\ri\right),
\]
where $\kappa$, $\tau$ are constants and $\li,\ri$ is the metric on $\Sl$ given in (\ref{prodsl2}). It follows from a straightforward computation that
\[
[X_1,X_2]=2X_3,\ \ \  [X_1,X_3]=-2X_2, \ \ \ [X_2,X_3]=-2X_1.
\]
We take the vector fields $\tilde{E}_1=\tfrac{\sqrt{\kappa}}{2}X_1$, $\tilde{E}_2=\tfrac{\sqrt{\kappa}}{2}X_2$ and $\tilde{E}_3=\tfrac{\kappa}{4\tau}X_3$, which form a pseudo-orthonormal frame on $\Sl$ with respect to the metric $\tilde{g}$. We denote $\nabla^\sim$ as the Levi-Civita connection associated to $\tilde{g}$. It follows from the Koszul formula that $\nabla^\sim_{\tilde{E}_i}\tilde{E}_i=0$ and that
\begin{align*}
    \nabla^\sim_{\tilde{E}_1}\tilde{E}_2&=\tau\tilde{E}_3, && \nabla^\sim_{\tilde{E}_2}\tilde{E}_3=-\tau\tilde{E}_1,\\
    \nabla^\sim_{\tilde{E}_1}\tilde{E}_3&=-\tau\tilde{E}_2, && \nabla^\sim_{\tilde{E}_3}\tilde{E}_1=(\frac{\kappa}{2\tau}-\tau)\tilde{E}_2,\\
        \nabla^\sim_{\tilde{E}_2}\tilde{E}_1&=-\tau\tilde{E}_3, && \nabla^\sim_{\tilde{E}_3}\tilde{E}_2=(\frac{\kappa}{2\tau}-\tau)\tilde{E}_1.
\end{align*}

\begin{proposition}\label{ujuj}
            Let $M$ be a totally geodesic Lagrangian submanifold of the pseudo-nearly Kähler $\Sl\times\Sl$. Assume that $(2\theta_1,2\theta_2,2\theta_3)=(\pi,\pi,0)$. Then $M$ is locally isometric to the submanifold $\Sl\to\Sl\times\Sl\colon u\mapsto (u,\kk u\kk)$. 
\end{proposition}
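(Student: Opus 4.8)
The plan is to mirror the proof of Proposition~\ref{uuk} almost verbatim, replacing the roles of the $E_1$-direction and $E_3$-direction, and using the spacelike Berger-like metric constructed just above instead of the timelike one. First I would choose a $\Delta_1$-orthonormal frame $\{E_1,E_2,E_3\}$ adapted to the case $(2\theta_1,2\theta_2,2\theta_3)=(\pi,\pi,0)$, so that by (\ref{case:10.1}) of Lemma~\ref{propAB} the product structure acts as
\[
PE_1=-E_1, \qquad PE_2=-E_2, \qquad PE_3=E_3.
\]
Then I would feed this into Lemma~\ref{lemmacase1}: since the submanifold is totally geodesic all $h_{ij}^k$ vanish, so Equation (\ref{sffc}) forces the surviving connection coefficients, and I would record the analogues of $\omega_{21}^3$ and $\omega_{32}^1$, now attached to the distinguished spacelike direction $E_3$.

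Next I would extract from the Gauss equation (\ref{Gauss}) the three differential relations among the remaining connection coefficients (the analogue of (\ref{equationstg0pipi})), define a $1$-form $\omega$ out of them, and check it is closed so that $\omega=d\varphi$ for a local function $\varphi$. Rotating $E_1,E_2$ by the angle $\varphi$ (rather than $E_2,E_3$, since now it is $E_3$ that is singled out) produces a new $\Delta_1$-orthonormal frame $\{F_1,F_2,F_3\}$ still satisfying the $P$-eigenvalue relations and $JG(F_1,F_2)=\sqrt{\tfrac{2}{3}}F_3$, and with constant connection coefficients. Comparing these constants against the Koszul-formula table for the spacelike Berger-like metric, I would read off the parameters $\tau$ and $\kappa$ and invoke Proposition~\ref{propositionconstants} to conclude $M$ is locally isometric to $\Sl$ with that metric.

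Finally I would translate back to an immersion $f=(p,q)$ into $\Sl\times\Sl$. Writing $df(F_i)=(p\alpha_i,q\beta_i)$ with $\alpha_i,\beta_i\in\slf$, the relation $PE_3=E_3$, $PE_1=-E_1$, $PE_2=-E_2$ yields $\beta_3=\alpha_3$, $\beta_1=-\alpha_1$, $\beta_2=-\alpha_2$. Using (\ref{connectionr8}) to compute $\nabla^E_{F_i}F_j$ in terms of the Lorentzian cross products and the derivatives $F_i(\alpha_j)$, and comparing with the product-connection coefficients obtained via (\ref{relprodkal}), I would derive a first-order system for the $\alpha_i$ governed by a sign $\varepsilon=\pm1$; the compatibility conditions coming from $\beta_i=\pm\alpha_i$ will again force $\varepsilon=-1$, making all $\alpha_i$ constant. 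Up to conjugation by a matrix $c\in\Sl$ I may then normalize $\alpha_i$ to multiples of $c\ii c^{-1},c\jj c^{-1},c\kk c^{-1}$, and integrating $D_{F_i}f=(p\alpha_i,q\beta_i)$ with initial condition $f(\id_2)=(\id_2,\id_2)$ gives $p(u)=cuc^{-1}$, $q(u)=c\kk u\kk c^{-1}$; an ambient isometry removes $c$ to yield $f(u)=(u,\kk u\kk)$. The main obstacle I anticipate is purely bookkeeping: ensuring that the distinguished direction is consistently the spacelike $E_3$ (so that the correct spacelike Berger metric and its Koszul table apply) rather than the timelike $E_1$ of Proposition~\ref{uuk}, and tracking the resulting sign changes through the cross-product identities so that the final $\kk$-conjugation, rather than the $\ii$-conjugation, emerges.
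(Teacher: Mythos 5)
Your plan follows the paper's proof essentially verbatim: the paper proves this proposition by repeating the argument of Proposition~\ref{uuk} with the distinguished direction moved to the spacelike $E_3$, obtaining the same relations $\beta_1=-\alpha_1$, $\beta_2=-\alpha_2$, $\beta_3=\alpha_3$ and the normalization $\alpha_1=\tfrac{1}{\sqrt 2}c\ii c^{-1}$, $\alpha_2=\tfrac{1}{\sqrt 2}c\jj c^{-1}$, $\alpha_3=-\sqrt{\tfrac32}\,c\kk c^{-1}$, and then integrating to $p(u)=cuc^{-1}$, $q(u)=c\kk u\kk c^{-1}$. The one correction: since the plane spanned by $E_1$ and $E_2$ is now Lorentzian ($E_1$ timelike, $E_2$ spacelike), the frame change absorbing $\varphi$ must be a hyperbolic rotation, $F_1=\cosh\varphi\,E_1+\sinh\varphi\,E_2$, $F_2=\sinh\varphi\,E_1+\cosh\varphi\,E_2$, $F_3=-E_3$, rather than the circular rotation your phrase ``rotating $E_1,E_2$ by the angle $\varphi$'' suggests --- a circular rotation would not preserve $\Delta_1$-orthonormality.
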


\begin{proof}The proof is similar to the proof of Proposition \ref{uuk}, but with some minor differences. First we take a frame $\{E_1,E_2,E_3\}$ such that $A$ and $B$ take the form of Case \ref{case:10.1} of Lemma \ref{propAB}. Then we take the closed 1-form given by
\[
\omega(E_1)=-\omega_{11}^2, \ \ \ \omega(E_2)=-\omega_{22}^1, \ \ \ \ \omega(E_3)=-\omega_{32}^1+\frac{5}{\sqrt{6}}.
\]
We define the frame $F_i$ as
\[
F_1=\cosh(\varphi) E_1+\sinh(\varphi)E_2,\ \ \ \  F_2=\sinh(\varphi)E_1+\cosh(\varphi)E_2,\ \ \ \ \ F_3=-E_3,
\]
where $\varphi$ is a local function such that $d\varphi=\omega$.

As before we write $df(F_i)=D_{F_i}f=(p\alpha_i,q\beta_i)$, where $\alpha_i,\beta_i$ are matrices in $\mathfrak{sl}(2,\R)$. They satisfy $\alpha_1=-\beta_1$, $\alpha_2=-\beta_2$ and $\alpha_3=\beta_3$. Again we may choose $c$ in $\Sl$, such that
\begin{equation*}
    \alpha_1=\frac{1}{\sqrt{2}}c\ii c^{-1},\ \ \ \  \alpha_2=\frac{1}{\sqrt{2}}c\jj c^{-1},\ \ \ \  \alpha_3=-\sqrt{\frac{3}{2}}c \kk c^{-1}.
\end{equation*}
After choosing initial conditions $f(\id_2)=(\id_2,\id_2)$, we obtain that the unique solution of the system $D_{F_i}f=(p\alpha_i,q\beta_i)$ is $p(u)=cuc^{-1}$, $q(u)=c\kk u\kk c^{-1}$. Taking an isometry on the pseudo-nearly Kähler $\Sl\times\Sl$ finally yields $p(u)=u$, $q(u)=\kk u\kk$.
\end{proof}


\subsection{Lagrangian submanifolds of the non-diagonalizable types}
In this subsection we show that there do not exist totally geodesic Lagrangian submanifolds in types (2), (3) and (4) of Lemma \ref{propAB}.

\begin{lemma}\label{caseee2}
There are no totally geodesic immersions into the nearly Kähler $\Sl\times\Sl$ corresponding to Case \emph{\ref{case:10.2}} of Lemma \emph{\ref{propAB}}.
\end{lemma}
\begin{proof}
Suppose that $M$ is a totally geodesic Lagrangian submanifold of $\Sl\times\Sl$ associated to Case \ref{case:10.2} of Lemma \ref{propAB}. The left hand side of the Codazzi equation in (\ref{Codazzi}) for a totally geodesic submanifold is always zero. Computing the right hand side for $X=E_1$, $Y=E_2$ and $Z=E_2$ yields  
\[
-\frac{4}{3} ( \sin 2 \theta_1+ \cos 2 \theta_1 \cot 2 \theta_1)JE_1
\]
which cannot be zero, therefore a contradiction.
\end{proof}

\begin{lemma}\label{caseee3}
There are no totally geodesic immersions into the nearly Kähler $\Sl\times\Sl$ corresponding to Case \emph{\ref{case:10.3}} of Lemma \emph{\ref{propAB}}.
\end{lemma}
\begin{proof}
Suppose that $M$ is a totally geodesic Lagrangian submanifold of $\Sl\times\Sl$ associated to Case \ref{case:10.3} of Lemma \ref{propAB}. 
As in the previous lemma, the left hand side  of the Codazzi equation is zero. 
The component in the direction of $JE_1$ of the right hand side of the Codazzi equation for Case \ref{case:10.3} with $X=E_1,Y=E_2,Z=E_2$ is $\pm\tfrac{8}{9 \sqrt{3}}$, which is  a contradiction.  
\end{proof}

\begin{lemma}\label{caseee4}
There are no totally geodesic immersions into the nearly Kähler $\Sl\times\Sl$ corresponding to Case \emph{\ref{case:10.4}} of Lemma \emph{\ref{propAB}}.
\end{lemma}
\begin{proof}
Suppose that $M$ is a totally geodesic Lagrangian submanifold of $\Sl\times\Sl$ associated to Case \ref{case:10.4} of Lemma \ref{propAB}. The left hand side of the Codazzi equation is always zero for totally geodesic submanifolds and the right hand side of the Codazzi equation for $X=E_1, Y=E_2,Z=E_2$ is
\[
\frac{2}{3} \sinh (2 \lambda ) \cos (2 \theta_1+\theta_2)JE_2
\]
which is not zero since $\lambda$ must be different from zero and $2\theta_1+\theta_2$ is equal to zero modulo $\pi$.
\end{proof}
\subsection{Proof of the main result}
We adapt a lemma from \cite{reckziegel} to Lagrangian submanifolds:
 \begin{lemma}\label{michael}
     Let $\mathcal{F}_1,\mathcal{F}_2\colon M\to \Sl\times\Sl$ 
     be two Lagrangian immersions into the pseudo-nearly Kähler $\Sl\times\Sl$ of diagonalizable type. Let $E_i$ be a $\Delta_1$-orthonormal frame on $M$. Let $F_i$ and $\tilde{F}_i$ be frames along $\mathcal{F}_1$ and $\mathcal{F}_2$, respectively, such that $d\mathcal{F}_1(E_i)=F_i$ and $d\mathcal{F}_2(E_i)=\tilde{F}_i$. If $\mathcal{F}_1(p_o)=\mathcal{F}_2(p_o)$, $F_i(p_o)=\tilde{F}_i(p_o)$ and $\omega_{ij}^k=\tilde{\omega}_{ij}^k$, $h_{ij}^k=\tilde{h}_{ij}^k$, then $\mathcal{F}_1$ and $\mathcal{F}_2$ are locally congruent.
     \end{lemma}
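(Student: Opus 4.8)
The plan is to convert the pointwise agreement of the scalar invariants $\omega_{ij}^k$ and $h_{ij}^k$ into the statement that the two immersions, equipped with suitable adapted frames, are integral maps of one and the same first-order differential system, and then to invoke the uniqueness part of the congruence theorem of \cite{reckziegel}. Since $M$ is Lagrangian, the six vector fields $\{F_1,F_2,F_3,JF_1,JF_2,JF_3\}$ form a frame of $T(\Sl\times\Sl)$ along $\mathcal{F}_1$, and likewise $\{\tilde{F}_1,\tilde{F}_2,\tilde{F}_3,J\tilde{F}_1,J\tilde{F}_2,J\tilde{F}_3\}$ along $\mathcal{F}_2$; these are the adapted frames I would work with.

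First I would write down the structure equations for the adapted frame. The Gauss formula (\ref{gaussformula}) gives
\[
\tilde{\nabla}_{F_i}F_j=\sum_k\omega_{ij}^k F_k+\sum_k h_{ij}^k JF_k,
\]
while $\tilde{\nabla}J=G$ together with $J^2=-\id$ yields
\[
\tilde{\nabla}_{F_i}(JF_j)=G(F_i,F_j)+J\tilde{\nabla}_{F_i}F_j=G(F_i,F_j)+\sum_k\omega_{ij}^k JF_k-\sum_k h_{ij}^k F_k.
\]
By the table (\ref{tabla}) the term $G(F_i,F_j)$ is a universal constant multiple of one of the $JF_l$, the constant $\pm\sqrt{2/3}$ depending only on the indices and on the frame being $\Delta_1$-orthonormal. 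Hence the covariant derivative of the adapted frame along $\mathcal{F}_1$ is a linear combination of the frame itself whose coefficients are built solely from $\omega_{ij}^k$, $h_{ij}^k$ and universal constants. The identical computation for $\mathcal{F}_2$ produces the same linear combinations, because $\omega_{ij}^k=\tilde{\omega}_{ij}^k$ and $h_{ij}^k=\tilde{h}_{ij}^k$ by hypothesis; no further ambient data enter.

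Next I would integrate this system. Because the tensors $g$, $J$ and $G=\tilde{\nabla}J$ of $\Sl\times\Sl$ have constant components in any $\Delta_1$-adapted frame (this is exactly the content of (\ref{tabla}) for $G$, while $g$ is $\Delta_1$ and $J$ interchanges $F_k$ and $JF_k$ up to sign), the structure equations above present the frame-valued map as the solution of a completely integrable total differential equation on the bundle of adapted frames, whose coefficient one-form is determined entirely by the shared functions $\omega_{ij}^k,h_{ij}^k$; the Gauss and Codazzi equations (\ref{Gauss})--(\ref{Codazzi}), which hold because $\mathcal{F}_1$ and $\mathcal{F}_2$ are genuine immersions, are precisely its integrability conditions. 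Equivalently, realizing $\Sl\times\Sl\subset\R^8_4$ one may integrate the explicit linear system for the $\R^8_4$-valued maps $\mathcal{F}_1$, $Q\mathcal{F}_1$ and the frame vectors, using (\ref{connectionr8}) and (\ref{relprodkal}), where $Q$ acting on the frame is expressed through (\ref{prodQ}) and $P|_M=A+JB$. The initial conditions $\mathcal{F}_1(p_o)=\mathcal{F}_2(p_o)$ and $F_i(p_o)=\tilde{F}_i(p_o)$ fix the solution at $p_o$, so by uniqueness of solutions to such a system the two adapted frames coincide on a neighbourhood of $p_o$, whence $\mathcal{F}_1=\mathcal{F}_2$ there; this is the substance of the cited lemma of \cite{reckziegel}.

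The step I expect to be the main obstacle is showing that the derivative of the \emph{entire} adapted frame closes on the shared data, i.e. that every ambient object feeding into it---the action of $J$, the tensor $G$, and, in the Euclidean model, the structure $Q$ together with the normals of $\Sl\times\Sl$ in $\R^8_4$---is controlled by $\omega_{ij}^k$, $h_{ij}^k$ and universal constants, and not by independent information that could differ between $\mathcal{F}_1$ and $\mathcal{F}_2$. For $J$ and $G$ this is immediate from $J^2=-\id$ and (\ref{tabla}); the delicate point is $Q$, for which I would argue that $P$ is an ambient tensor, so the operators $A,B$ agree at $p_o$ since the frames agree there, and, their entries being the angle functions whose derivatives are prescribed by $h$ through (\ref{anglederi}), they must agree on a whole neighbourhood. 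Once this closure is established, the argument reduces to the standard uniqueness theorem for the resulting total differential system.
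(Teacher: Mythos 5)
The paper does not actually prove this lemma: it states it as an adaptation of a result of \cite{reckziegel} and gives no argument, so there is nothing in the text to compare yours against line by line. Your proposal supplies the standard moving-frame uniqueness argument that the citation implicitly rests on, and it is correct in its essentials: along a Lagrangian immersion the adapted frame $\{F_i,JF_i\}$ satisfies a first-order system whose coefficients are the shared functions $\omega_{ij}^k$, $h_{ij}^k$ together with the universal constants of (\ref{tabla}), and matching initial data at $p_o$ then forces the two frames, hence the two immersions, to coincide on a neighbourhood. You also correctly isolate the only non-formal point, namely that every ambient ingredient feeding the system --- $J$, $G$, and, in the $\R^8_4$ model, $P$, $Q$ and the two normals of $\Sl\times\Sl$ --- must close on the shared data. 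Your resolution for $P$ via the angle functions and (\ref{anglederi}) works for the frame diagonalizing $P$; a cleaner and frame-independent alternative is to invoke (\ref{nablap}), which expresses $\tilde{\nabla}P$ along $M$ through $P$ and $G$ alone and hence gives a first-order ODE for $A+JB$ with universal coefficients, so that $A$ and $B$ propagate from their common initial value. One small overstatement: in a general ambient space the Gauss and Codazzi equations are not \emph{precisely} the integrability conditions of such a system, but this is immaterial here, since only uniqueness (not existence) is needed and both systems are integrable by virtue of being realized by actual immersions.
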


   \begin{proof}[Proof of Theorem \ref{maintheorem}]
     In Examples \ref{example1}, \ref{example2} and \ref{example3} we showed that the three immersions of Theorem \ref{maintheorem} are totally geodesic and Lagrangian. 
     
     Let $M$ be a totally geodesic Lagrangian submanifold of the pseudo-nearly Kähler $\Sl\times\Sl$.
     By Lemma \ref{propAB}, there are four cases to consider.
     In Lemmas~\ref{caseee2}, \ref{caseee3} and \ref{caseee4} we proved that there are no totally geodesic Lagrangian submanifolds in Cases \ref{case:10.2}, \ref{case:10.3} and \ref{case:10.4} of Lemma \ref{propAB}.
     
     In Lemma \ref{trescasos} we have seen that any totally geodesic Lagrangian submanifold of $\Sl\times\Sl$ of diagonalizable type is congruent to a submanifold with angle functions $(\tfrac{4\pi}{3},\tfrac{4\pi}{3},\tfrac{4\pi}{3})$, $(\pi,\pi,0)$ or $(0,\pi,\pi)$.
     In Proposition \ref{idu}, we showed that all totally geodesic Lagrangian submanifolds with angle functions $(\tfrac{4\pi}{3},\tfrac{4\pi}{3},\tfrac{4\pi}{3})$ are locally congruent.
     The totally geodesic Lagrangian submanifolds with angle functions $(0,\pi,\pi)$ or $(\pi,\pi,0)$ are classified up to isometries by Propositions \ref{uuk} and~\ref{ujuj}. 

     It remains to prove that any totally geodesic Lagrangian submanifold of $\Sl\times\Sl$ of diagonalizable type with angle functions $(0 ,\pi,\pi)$ is congruent to Example $\ref{example2}$ and those with angle functions $(\pi,\pi,0)$ are congruent to Example $\ref{example3}$.
     Before proving this, we will show that if $(a_o\alpha,b_o\beta)$  is a tangent vector on $M$ which is part of the $\Delta_1$-orthonormal frame that diagonalizes $P$ in Lemma~\ref{propAB}, then $\alpha$ and $\beta$ are linearly dependent.
     
          Suppose that $\alpha$ and $\beta$ are linearly independent.
     Then on the one hand we have
     \[
     P(a_o\alpha,b_o\beta)=(a_o\beta,b_o\alpha)
     \]
     and on the other hand
     \[
   P(a_o\alpha,b_o\beta)=\cos 2\theta_i (a_o\alpha,b_o\beta)+\frac{\sin 2\theta_i }{\sqrt{3}}(a_o (\alpha-2\beta),b_o(2\alpha-\beta)).
     \]
Thus, we get that $\frac{2 \sin 2 \theta_i}{\sqrt{3}}=1$ and $\frac{2 \sin 2 \theta_i}{\sqrt{3}}=-1$, a contradiction. Therefore $\alpha$ and $\beta$ are multiples.

     We use Lemma \ref{michael} to prove that $M$ is locally congruent to one of the three examples of Theorem~\ref{maintheorem}.
     We have to show that given a totally geodesic Lagrangian immersion $f\colon M\to\Sl\times\Sl$, $p_o\in M$, $\{\tilde{E}_1,\tilde{E}_2,\tilde{E}_3\}$ an $\Delta_1$-orthonormal frame at $p_o$, there exists an isometry $\phi$ of $\Sl\times\Sl$ such that $\phi(f(p_o))=(\id_2,\id_2)$, the frame $d(\phi\circ f)(\tilde{E}_i)$ is equal to one of the frames in Examples \ref{example2} and \ref{example3} and the components of the second fundamental form and of the connection coincide for one of the immersions.
     Suppose that $f(p_o)=(a_o,b_o)$.
     Take the isometry $\phi\colon (a,b)\mapsto (c a_o^{-1}
     \, a \, c^{-1},c b_o^{-1} \, b\, c^{-1})$ with $c\in\Sl$, then $\phi(a_o,b_o)=(\id_2,\id_2)$ and $d\phi(a_o\alpha,b_o\beta)=(c\alpha\, c^{-1},c\beta \, c^{-1})$ for any $\alpha,\beta\in\slf$.
     We can assume that $\beta=\varepsilon \alpha$, where $\varepsilon$ is equal to $0$, $1$ or $-1$. 
     As conjugation by an element of $\Sl$ is an isometry of $\slf$, we can choose $c$ in a convenient way, taking $\alpha$ to an arbitrary element of $\slf$. Therefore, we can take the frame at $(a_o,b_o)$ to one of the frames in Examples \ref{example2} and \ref{example3}, depending on the value of $\varepsilon$.

The last step is to prove that the components of the second fundamental form and of the connection are equal to those from Examples \ref{example2} and \ref{example3}.
As the submanifold is totally geodesic, the functions $h_{ij}^k$ are all equal to zero. The components of the connection $\omega_{ij}^k$ are determined as we can see for the frame $F_i$ in the proofs of Propositions \ref{uuk} and \ref{ujuj}.



  \end{proof}
  
\section*{Acknowledgements}
The authors would like to Professor Luc Vrancken for the valuable discussions regarding this work.

\bibliographystyle{abbrv}
\bibliography{tg2.bib}


\end{document}